\DeclareFontShape{T1}{lmr}{b}{sc}{<->ssub*cmr/bx/sc}{}
\DeclareFontShape{T1}{lmr}{bx}{sc}{<->ssub*cmr/bx/sc}{}
\definecolor{lightred}{rgb}{1, 0.5, 0.5}
\definecolor{lightgrey}{rgb}{0.6, 1, 0.6}
\DeclarePairedDelimiter\abs{\lvert}{\rvert}%
\DeclareFontFamily{U}{mathx}{\hyphenchar\font45}
\DeclareFontShape{U}{mathx}{m}{n}{
      <5> <6> <7> <8> <9> <10>
      <10.95> <12> <14.4> <17.28> <20.74> <24.88>
      mathx10
      }{}
\DeclareSymbolFont{mathx}{U}{mathx}{m}{n}
\DeclareMathSymbol{\bigtimes}{1}{mathx}{"91}
    \newcounter {subsubsubsection}[subsubsection]
    \renewcommand\thesubsubsubsection{\thesubsubsection .\@arabic\c@subsubsubsection}
    \newcommand\subsubsubsection{\@startsection{subsubsubsection}{4}{\z@}%
                                         {-3.25ex\@plus -1ex \@minus -.2ex}%
                                         {1.5ex \@plus .2ex}%
                                         {\normalfont\normalsize\itshape}}
    \newcommand*\l@subsubsubsection{\@dottedtocline{4}{10.0em}{4.1em}}
    \newcommand*{\subsubsubsectionmark}[1]{}
  \theoremstyle{plain}
\newtheorem{theorem}{Theorem}[section]
\newtheorem{proposition}[theorem]{Proposition}
\newtheorem{corollary}[theorem]{Corollary}
\newtheorem{conjecture}[theorem]{Conjecture} 
\newtheorem{lemma}[theorem]{Lemma} 
\newtheorem{conventiondef}[theorem]{Convention-Definition} 
  \theoremstyle{remark}
\newtheorem{remark}[theorem]{Remark}
  \theoremstyle{definition}
\newtheorem{definition}[theorem]{Definition}
\newtheorem{question}[theorem]{Question}
\newcommand{\TI}
{T\! \circ \!I}
\newcommand{\ie}{\textit{i.e. }}
\newcommand{\ZZ}{\mathbb{Z}}
\newcommand{\RR}{\mathbb{R}}
\renewcommand{\SS}{\mathbb{S}}
\newcommand{\QQ}{\mathbb{Q}}
\newcommand{\nab}{\raisebox{-.65ex}{\ensuremath{\nabla}}}
\renewcommand{\a}{\alpha_3^1}
\newcommand{\<}{\left<\!\left<}
\renewcommand{\>}{\right>\!\right>}
\title{Finite-type $1$-cocycles of knots given by Polyak-Viro formulas}
\author{Arnaud Mortier}
\date{\today}
\begin{document}

\maketitle

\begin{abstract}
\footnotesize
We present a new method to produce simple formulas for $1$-cocycles of knots over the integers, inspired by Polyak-Viro's formulas for finite-type knot invariants. We conjecture that these $1$-cocycles represent finite-type cohomology classes in the sense of Vassiliev. An example of degree $3$ is studied, and shown to coincide over $\ZZ_2$ with the Teiblum-Turchin cocycle $v_3^1$.

\end{abstract}

\tableofcontents

\section*{Introduction}

The study of the topology of the space of knots was initiated in $1990$ by V.A.Vassiliev \citep{Vassiliev1990}, who defined finite-type cohomology classes by applying ideas from the finite-dimensional affine theory of plane arrangements to the infinite-dimensional theory of long knots in $3$-space. Here, \textit{of finite type} means of finite complexity, in some sense, hence hopefully computable.
Independently, outstanding general results on the topology of knot spaces have been obtained by A.Hatcher \citep{HatcherTopologicalmoduli}, leading to the idea that higher dimensional invariants of knots -- in particular, $1$-cocycles -- should capture information about the geometry of a knot (see \citep{FiedlerQuantum1cocycles}).

The zeroth level of Vassiliev's theory, known as finite-type knot invariants, has been extensively studied in the subsequent years \citep{BirmanLin, Kontsevich, Goussarov, BarNatanVKI}.  However at the first level -- that of $1$-cocycles, only one example, in degree $3$, has been proved to exist by D.Teiblum and V.Turchin, and then actually described by V.A.Vassiliev with a formula over $\ZZ_2$ \citep{Vassiliev, Turchin}. Since then, no progress has been made, probably because of the technicity of Vassiliev's construction and the apparent difficulty of turning it into a systematic method: indeed, it involves singularity theory with differential geometric conditions. The theory seems to be stuck at this one question: is there a way to evaluate finite-type $1$-cocycles combinatorially, \ie without involving geometry?

Our main result is that the answer is yes. In \citep{MortierPolyakEquations, VKTG}, it was shown that Polyak-Viro's arrow diagram formulas \citep{PolyakViro} (the most compact combinatorial formulas to describe finite-type invariants) are the kernel of an explicit linear map with values in a space of degenerate arrow diagrams. In Section~\ref{sec:top} we show that this map has a natural interpretation as a coboundary map in a simplicial model of the space of knots, adapted to the particular feature of Polyak-Viro's formulas that is \textit{counting subdiagrams}. As a result, the $1$-codimensional objects in this simplicial model are natural candidates to describe $1$-cocycles in the space of knots.

Concretely, just like Polyak-Viro's invariants are defined via the finite set of crossings of a knot projection, the ways they are arranged (the Gauss diagram), and its power set (the subdiagrams, endowed with weights that are products of writhe numbers), we consider the finite set of \textit{germs} of a loop in the space of knots endowed with a projection -- that is, its local behaviour near each Reidemeister move, together with their \textit{subgerms}, also weighted with the product of their writhe numbers. Cocycles defined via knot projections should vanish on the meridians of the $2$-codimensional strata defined by the \textit{higher order Reidemeister theorem} \citep{Fiedler1parameter, Fiedler1parameterPolynomials} -- these strata also appear as a set of elementary moves in the study of surfaces embedded in $4$-space, known  as \textit{Roseman moves}, in their \enquote{movie} version \citep{CarterKamadaSaito, CarterSaito}. In Section~\ref{sec:method}, we explain how to write down the system of equations derived from those strata, and how to reduce it so as to make it computable. 

In the last section, we compute  that system of equations in degree $3$ and study the properties of one of its solutions which we call $\alpha_3^1$. It is proved in that the evaluation of $\alpha_3^1$ on the rotation of a long knot $K$ around its axis is equal to $-v_2(K)$. This was conjectured to hold (up to sign) for the Teiblum-Turchin cocycle in \cite{Turchin}. Lastly, we prove that the reductions mod $2$ of $\alpha_3^1$ and the Teiblum-Turchin cocycle are equal by showing directly that $\alpha_3^1$ mod $2$ is of finite type.

On the basis of these facts, we make the following conjecture.

\paragraph*{Conjecture 1.} Every $1$-cocycle of knots defined by an arrow germ formula of degree $n$ represents a finite-type cohomology class of degree no greater than $n$.\\
 
As an immediate consequence of this conjecture, one would obtain:
\paragraph*{Conjecture 2.} The arrow germ formula $\alpha_3^1$ is a realization of Teiblum-Turchin's cocycle over $\ZZ$.

\section*{Acknowledgements} 

I wish to express my full gratitude to Seiichi Kamada for his invitation to the Osaka City University Advanced Mathematical Insitute where this work was done, and his help through the Japanese administration. I am also grateful to Victoria Lebed for her inestimable support.

I thank Thomas Fiedler, Michael Polyak and Victor Turchin for fruitful discussions.

\section{Knot invariants from a simplicial viewpoint}
\label{sec:top}

\subsection{General principle}\label{subsec:principle}

One of the most common ways for a knot theorist to define a knot invariant consists of two steps: first, start from a knot diagram and construct something from it, say an element of an abelian group $A$; then, prove that the result does not change when one performs Reidemeister moves. There is an obvious interpretation of this process in terms of cellular cohomology with coefficients in $A$. Indeed, one may think of the set of knot diagrams isotopic to a given one as a cell, bounded by (finitely many, two-sided) codimension $1$ cells that are Reidemeister moves: associating something with every knot diagram amounts then to the choice of a $0$-cochain, and by applying the Stokes formula one sees that this cochain defines a knot invariant if and only if it is a cocycle.

Of course, this cellular complex is huge and there is no hope to compute its first coboundary map in general. However, assume that one has constructed a simple family of maps $\mathcal{F}$ from the set of knot diagrams to $A$, and wonders \emph{which linear combinations of these maps give knot invariants?} The answer can be given by an adequate model for the coboundary map from the previous discussion. Namely:

\begin{enumerate}
\item Construct a set $\mathcal{F}^1$ of objects that are likely to represent how the elements of $\mathcal{F}$ behave under Reidemeister moves.
\item For each element $f^1\in\mathcal{F}^1$ and each Reidemeister move $D\rightsquigarrow D^\prime$ associate an element of $A$, denoted by $\left<f^1, D\rightsquigarrow D^\prime\right>$.

\item Construct a linear map $d:\ZZ \mathcal{F}\rightarrow \ZZ \mathcal{F}^1$ that satisfies the formula:
$$f(D^\prime)-f(D)=\left<d(f), D\rightsquigarrow D^\prime\right> .$$
\end{enumerate}

Here $\ZZ \mathcal{F}$ is the $\ZZ$-module freely generated by the elements of $\mathcal{F}$. In other words, one has to build every ingredient in a Stokes formula, and prove that the formula is actually satisfied. There is a two-sided benefit from such a construction. First, the initial goal was achieved since computing the invariants coming from the family of maps $\mathcal{F}$ has been reduced to computing the kernel of an explicit matrix. Second, the elements of $\mathcal{F}^1$ are now very good candidates to produce $1$-cocycles of knots.

By $1$-cocycle of knots, we mean a $1$-cocycle in the space of all smooth embeddings $\SS^1 \hookrightarrow \RR^3$. The simplicial viewpoint can be extended one step further, thanks to the higher dimensional Reidemeister theorem that describes codimension $2$ strata corresponding to the \enquote{simplest} degeneracies of Reidemeister moves (see 
\cite{Fiedler1parameter, Fiedler1parameterPolynomials}). These strata will be described in Section~\ref{sec:strata}. We now give our main example of a family of maps $\mathcal{F}$ that fits nicely into such a cohomological framework.

\subsection{The example of arrow diagram invariants}

In \citep{MortierPolyakEquations} and \citep{VKTG} it has been shown that Goussarov-Polyak-Viro's combinatorial formulas for Vassiliev invariants (\cite{GPV}) are the kernel of an explicit linear map. We show here that this map is a part of a $0$-coboundary map in the spirit of the previous section. The framework is that of \textit{long knots} and \textit{based Gauss diagrams}, because they have easier combinatorics, and because it was the original settings for which V.A.Vassiliev defined his finite-type cohomology \citep{Vassiliev1990}. However, every statement can be adapted to other kinds of $1$-dimensional knotted objects such as the virtual knots on a group introduced in \citep{VKTG}, or tangle diagrams in the disc -- paying attention to symmetries whenever they may appear (see \citep{Ostlund}, Sections~$2.2$ and $2.4$, and \citep{VKTG}, Section~$4.1.2$).

\subsubsection{Notations and basic notions}\label{sec:notations}

\textbf{Convention.
}When several incomplete diagrams are represented side by side in a picture, or in one and the same equation, it is to be understood that
\begin{enumerate}
\item Every unseen part -- including missing decorations, such as local orientations -- is the same for all diagrams. 
\item The picture is valid no matter what are those unseen parts, unless otherwise specified in the caption. However, the \textit{visible} parts cannot contain additional arrow ends or a missing point at infinity.
\end{enumerate}

\subsubsubsection{Notations for the $0$-codimensional data}
A \textit{(based) arrow diagram} is an oriented line together with a finite number $n$ of abstract oriented chords attached to it at $2n$ distinct points. We shall represent such a diagram as based on a circle with a distinguished point \enquote{at infinity}. A \textit{(based) Gauss diagram} is an arrow diagram in which every arrow has been decorated with a sign, $+$ or $-$. Both types of diagrams are regarded up to positive homeomorphisms of the real line.

\textbf{Fact.} (see \cite{KauffmanVKT99}) Gauss diagrams are in $1$-$1$ correspondence with (virtual) long knot diagrams in the plane $\RR^2$ up to diagram isotopy (and detour moves).
 
This means that a Gauss diagram represents exactly what we want to think of as a cell of maximal dimension. Consequently, we introduce the $\QQ$-space $\mathfrak{G}$ freely generated by all Gauss diagrams and think of it as our space of $0$-chains. It is graded by the number of arrows.

Gauss diagrams enjoy a set of \textit{R-moves}, which are equivalent to the usual Reidemeister moves for knot diagrams. Their skeleton is depicted in Fig.\ref{Rmoves}; the signs and orientations have to follow some rules explained thereafter.

\begin{figure}[h!]
\centering 
\psfig{file=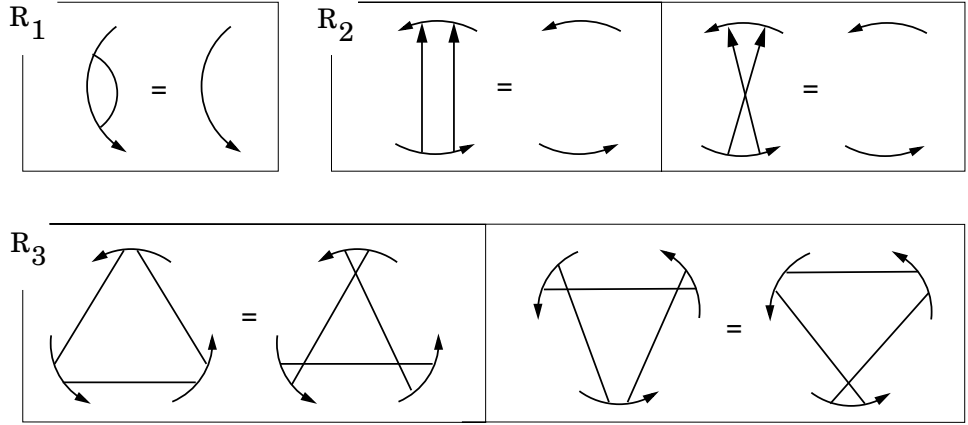,scale=0.7}
\caption{$\textrm{R}$-moves for Gauss diagrams (see below the rules for the decorations)}\label{Rmoves}
\end{figure}

\subsubsection*{R$_1$-moves}

An \textit{R$_1$-move} is the birth or death of an isolated arrow, as shown in Fig.\ref{Rmoves} (top-left). There is no restriction on the direction or the sign of the arrow.

\subsubsection*{R$_2$-moves}

An \textit{R$_2$-move} is the birth or death of a pair of arrows with different signs, whose heads are consecutive as well as their tails (Fig.\ref{Rmoves}, top-right).

\subsubsection*{R$_3$-moves}

\begin{definition}[The sign $\varepsilon$ and the co-orientation of R$_3$ moves]\label{def_epsilon}
In a classical Gauss diagram of degree $n$, the complementary of the arrows is made of $2n$ oriented components. These are called the \textit{edges} of the diagram. In a diagram with no arrow, we still call the whole circle an edge.

Let $e$ be an edge in a Gauss diagram, between two consecutive arrow ends that do not belong to the same arrow.
Put
$$\eta (e)=\left\lbrace \begin{array}{l}
+1 \text{ if the arrows that bound $e$ cross each other} \\
-1 \text{ otherwise}
\end{array}\right. ,$$
and let $\uparrow\!\!(e)$ be the number of arrowheads at the boundary of $e$.
Then define
$$\varepsilon (e)= \eta(e)\cdot(-1)^{\uparrow(e)}.$$
Finally, define $\mathrm{w}(e)$ as the product of the signs of the two arrows at the boundary of $e$.

\end{definition}

An \textit{R$_3$-move} is the simultaneous switch of the endpoints of three arrows as shown on Fig.\ref{Rmoves} (bottom), with the following conditions:
\begin{enumerate}
\item The value of $\mathrm{w}(e)\varepsilon(e)$ should be the same for all three visible edges $e$. This ensures that the piece of diagram containing the three arrows can be represented in a knot-diagrammatic way without making use of virtual crossings.
\item The values of  $\uparrow\!\!(e)$ should be pairwise different. This ensures that one of the arcs in the knot diagram version actually \enquote{goes over} the others.
\end{enumerate}

\begin{remark}\label{rem:coor}
From our cellular viewpoint, the sign $\mathrm{w}(e)\varepsilon(e)$ gives a natural co-orientation of the $1$-codimensional strata corresponding to R$_3$ moves. This \enquote{good} co-orientation is part of the reason why the $1$-cocycle formulas we are going to find later are so simple.
\end{remark}

\begin{proof}[Proof that our criterion for R$_3$-moves is correct]
Condition $2$ is obviously necessary. Now assume that it is satisfied. 
It is enough to check only one case where Condition $1$ is satisfied, and one case where it fails. Indeed, all possible cases are linked by an abstract connected graph (the \textit{cube} of R$_3$-moves, see Fig.\ref{cube}), and the status of Condition $1$ is preserved by walking along this graph. Namely, two adjacent triangles from Fig.\ref{cube} differ at the level of Gauss diagrams by switching two consecutive arrow ends, and changing the sign of the third arrow.
\end{proof}

\begin{figure}[h!]
\centering 
\psfig{file=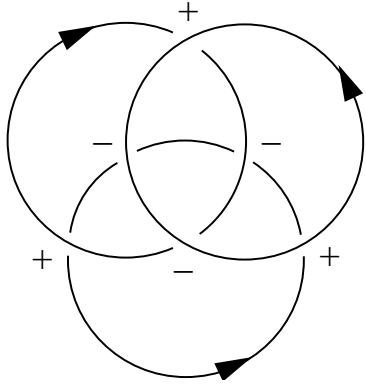,scale=0.78}
\caption{The cube of all eight possible local R-III situations and \enquote{edges} between them -- the diagram lies on $\SS^2$, and the \enquote{unbounded} region is the eighth triangle.}\label{cube}
\end{figure}

Any arrow diagram $A$ defines a linear form on $\mathfrak{G}$ by the Polyak-Viro formula \citep{PolyakViro}:
$$\<A,G\>=\left<S(A),I(G)\right>,$$
where $S(A)=\sum \operatorname{sign}(\sigma)A^\sigma$ is the alternate sum of all ($2^{\operatorname{deg}(A)}$) completions of $A$ into a Gauss diagram, $I(G)$ is the formal sum of all ($2^{\operatorname{deg}(G)}$) subdiagrams of $G$, and $\left<,\right>$ is the orthonormal scalar product with respect to the basis of Gauss diagrams. Roughly speaking, $\<A,G\>$ counts the number of times the configuration $A$ happens in $G$, with weights given by the product of the signs of the arrows involved.

The set of linear forms induced by arrow diagrams is our family of maps $\mathcal{F}$ in these settings (see Section~\ref{subsec:principle}).
The $\QQ$-space freely generated by this set is denoted by $\mathfrak{A}$ and regarded as a space of $0$-cochains. A linear combination $\mathcal{A}\in
\mathfrak{A}$ such that the map $\<\mathcal{A},\cdot\>$ is invariant under R-moves is called a \textit{(virtual) arrow diagram formula}. 

\subsubsubsection{Notations for the $1$-codimensional data}

\begin{definition}[germs, subgerms, partial germs]
An \textit{i-germ}, for $i=1,2,3$, is an ordered couple $(G_0, G_1)$ of Gauss diagrams that differ by an R$_i$-move. To prevent possible ambiguity, the edges involved in the R-moves are distinguished -- in the pictures, we will represent this by little dots (that shall not be confused with the point at infinity, always flanked by an $\infty$ sign). An arrow adjacent to at least one distinguished edge is called \textit{distinguished} as well.

Due to the nature of arrow diagram formulas (counting \textit{subdiagrams}), we need to introduce the notions of \textit{subgerms} and \textit{partial germs}. Note that there is a natural correspondence between the arrows of the two diagrams in an $i$-germ (excluding those involved in the R-move in cases $i=1,2$).

A \textit{partial $3$-germ} (or simply \textit{partial germ}) is the result of removing one distinguished arrow from a $3$-germ. The only intact distinguished edge is still called distinguished (as well as the adjacent arrows).

A \textit{subgerm} of a germ $(G_0,G_1)$ is the result of removing a (maybe empty) set of arrows from $G_0$, together with their match in $G_1$ -- in particular, no distinguished arrow can be removed in cases $i=1,2$. Also, in case $i=3$, at most one distinguished arrow may be removed. Hence a subgerm is always either a germ itself, or a partial germ.

The notions of \textit{arrow $i$-germs}, \textit{arrow subgerms} and  \textit{partial arrow germs} are defined similarly, with no signs decorating the arrows. 

The $\QQ$-spaces generated by $i$-germs ($i=1,2,3$) and partial germs, modulo all the relations $(x,y)+(y,x)=0$ in case $i=3$, are respectively denoted by $\mathfrak{G}_I$, $\mathfrak{G}_{I\!I}$, $\mathfrak{G}_\Delta$ and $\mathfrak{G}_\Lambda$. The corresponding arrow diagram spaces are respectively denoted by $\mathfrak{A}^I$, $\mathfrak{A}^{I\!I}$, $\mathfrak{A}^\Delta$ and $\mathfrak{A}^\Lambda$. 

The space $\mathfrak{A}^\Lambda$ is meant to be modded out by the \textit{triangle relations}, that can be of two types. One of them is shown on Fig.\ref{trirel}, the other is obtained by reversing all the arrows in the picture. The quotient is denoted by $\mathfrak{A}^\Lambda/\nab$. 

Finally, set
$$\mathfrak{G}_1=
\mathfrak{G}_I\oplus
\mathfrak{G}_{I\!I}\oplus
\mathfrak{G}_\Delta,\hspace*{0.5cm}\text{ and }\hspace*{0.5cm}
\mathfrak{A}^1=
\mathfrak{A}^I\oplus
\mathfrak{A}^{I\!I}\oplus
\mathfrak{A}^\Delta\oplus
\mathfrak{A}^\Lambda    /\nab.$$
These are respectively our spaces of $1$-chains and $1$-cochains.
\end{definition}

\begin{remark}
The triangle relations have appeared in several places in topology since M.Polyak found them at the beginning of his work on arrow diagrams enhanced by $3$-vertices \citep{P1} -- see his review given in \citep{PolyakTalk}. They also appear tautologically in V.A.Vassiliev's language in \citep{Vassiliev}.  Here, their meaning is the following: we have seen that in a cellular model, being a knot invariant should be equivalent to having zero coboundary, and obviously, zero and zero \textit{modulo something} are not equivalent. However, partial arrow germs are not strictly a part of the cellular model that we discussed earlier: they are an artifact due to the particular settings of arrow diagram invariants that regard subdiagrams rather than simply diagrams. To recover an adapted cellular model, one should dilate the R$_3$ strata so that the three arrow-end switches happen separately. The oriented boundary of a dilated stratum is then exactly a triangle relator (this is explained in detail in \citep{PVCasson}).
\end{remark}

\begin{figure}[h!]
\centering 
\psfig{file=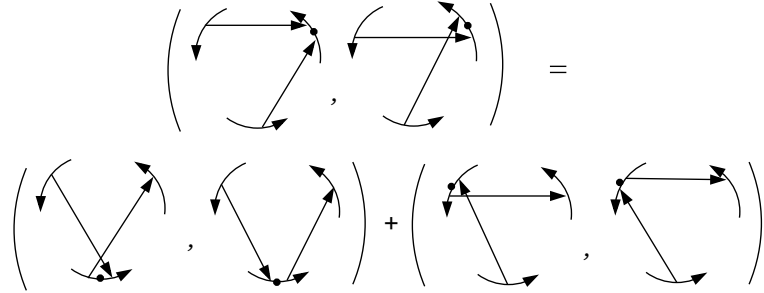,scale=1}
\caption{One triangle relation for partial arrow  germs -- the other is obtained by reversing the orientation of every arrow}\label{trirel} 
\end{figure}

\subsubsection{Stokes formula}

\subsubsubsection{The  boundary and coboundary maps}
The boundary map $\partial$ is defined on the generators by
$$\begin{array}{cccc}
\partial : & \mathfrak{G}_1 & \rightarrow & \mathfrak{G}\\
 & (G_0, G_1) & \mapsto & G_1 - G_0
\end{array}.$$
It obviously maps to zero the relators $(x,y)-(y,x)$.

\begin{definition}\label{def:coboundary}
Let $A$ be an arrow diagram. Consider the formal sum of all arrow germs and partial arrow germs $(A^0, A^1)$ such that $A^1$ (after forgetting that some edges are distinguished) is equal to $A$, and \textit{such that $A^1$ has no less arrows than $A^0$}.  The class of this formal sum modulo the triangle relations is denoted by $dA$.
This defines componentwise a linear map
$$d:\mathfrak{A}\rightarrow \mathfrak{A}^1,$$ that splits into $d^I\oplus d^{I\!I}\oplus d^\Delta\oplus d^\Lambda $.
\end{definition}

\begin{remark}
The reason why $d^I$ and $d^{I\!I}$ do not count the germs $(A^0, A^1)$ with $\operatorname{deg}(A^1)<\operatorname{deg}(A^0)$ is the following. By essence, when an arrow diagram looks at a Gauss diagram, what it sees is all of the subdiagrams of $G$. Now in the boundary of an $i$-germ ($i=1,2$), the Gauss diagram with less arrows happens \textit{twice} as a subdiagram, with opposite coefficients, whence arrow diagrams do not see it at all.
\end{remark} \subsubsubsection{The pairing between $\mathfrak{G}_1$ and $\mathfrak{A}^1$ }
We mimic the definition of arrow diagrams as $0$-cochains. The space $\mathfrak{G}_1\oplus \mathfrak{G}_\Lambda$ is endowed with  the orthonormal scalar product with respect to the basis of germs, denoted by $\left<  ,\right>$.
Again there is a map $S$ defined componentwise by the formula ($*\in \left\lbrace I, II, \Delta, \Lambda \right\rbrace $):

$$
\begin{array}{cccc}
S: &  \mathfrak{A}^* & \rightarrow & \mathfrak{G}_*\\
 & \alpha & \mapsto & \sum_{\sigma\in \left\lbrace \pm 1\right\rbrace^n}\operatorname{sign}(\sigma) \alpha^\sigma 
\end{array},$$ 
where $\alpha^\sigma$ is the enhancement of $\alpha$ into a germ by sign decorations, and $\operatorname{sign}(\sigma)$ is the product of these signs. This map admits an adjoint for $\left<, \right>$, denoted by $T$, which forgets the signs of a germ and remembers only their product as a coefficient (see \citep{VKTG}, Section~$4.1.2$).

Finally, we introduce the linear map
$$I:\mathfrak{G}_1\rightarrow  \mathfrak{G}_1\oplus \mathfrak{G}_\Lambda$$
that sends a germ to the formal sum of its subgerms.

\begin{definition}
For $\alpha\in \mathfrak{A}^I\oplus
\mathfrak{A}^{I\!I}\oplus
\mathfrak{A}^\Delta\oplus
\mathfrak{A}^\Lambda $ and $\gamma\in \mathfrak{G}_1$, set
$$\<  \alpha,\gamma\> =\left<  S(\alpha),
I(\gamma)\right>=\left<  \alpha,
\TI(\gamma)\right>.$$

\end{definition}

\begin{lemma}\label{cruciallemma1}
The value of the bracket $\<  \alpha,\gamma\>$ only depends on the class of $\alpha$ in $\mathfrak{A}^1$.
\end{lemma}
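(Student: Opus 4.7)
The plan is to reduce the lemma to the statement that every triangle relator pairs trivially with every $1$-germ. Indeed, by the very definition of $\mathfrak{A}^1$, the only relations it imposes on the direct sum $\mathfrak{A}^I\oplus\mathfrak{A}^{I\!I}\oplus\mathfrak{A}^\Delta\oplus\mathfrak{A}^\Lambda$ are the triangle relations in the $\Lambda$-summand. Fix therefore a triangle relator $\tau\in\mathfrak{A}^\Lambda$ and a $1$-germ $\gamma\in\mathfrak{G}_1$; I want to show that $\<\tau,\gamma\>=0$. Using the adjoint expression $\<\tau,\gamma\>=\left<\tau,\TI(\gamma)\right>$, and since $\tau$ is supported in $\mathfrak{A}^\Lambda$, only the $\mathfrak{G}_\Lambda$-component of $\TI(\gamma)$ can contribute.

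Next I would identify that component explicitly. Partial germs arise in $I(\gamma)$ precisely when $\gamma$ has a $3$-germ summand: they are produced by removing exactly one of the three distinguished arrows, possibly together with a subset of the non-distinguished ones. After applying $T$, each such partial germ picks up a coefficient equal to the product of the signs of its surviving arrows. The problem thus reduces to fixing a single $3$-germ $\gamma$ and checking that the resulting weighted sum of partial arrow germs, indexed by the three choices of distinguished arrow to delete (and by subsets of the remaining arrows), pairs to zero with $\tau$.

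The heart of the proof is then a matching argument based on the geometric content of Fig.~\ref{trirel} recalled in the Remark preceding it: $\tau$ is the oriented boundary of a \enquote{dilated} R$_3$-stratum, whose three sides correspond one-for-one to the three elementary arrow-end switches that compose the R$_3$-move of $\gamma$. Hence each side of the triangle $\tau$ is tailored to cancel exactly one of the three partial-germ contributions produced by $\gamma$, with relative signs dictated by the orientation of the dilated triangle and by the co-orientation $\mathrm{w}(e)\varepsilon(e)$ of R$_3$ from Remark~\ref{rem:coor}. Once all sign conventions are unwound, the three contributions should cancel term by term.

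The main obstacle I expect is the sign bookkeeping in this last step: identifying which edge survives as distinguished in each partial germ, and verifying that the signs introduced by $S$ (alternation over sign completions), by $T$ (product of surviving signs), by the R$_3$ co-orientation, and by the orientation of the dilated triangle conspire to produce genuine cancellations rather than mere equalities of absolute value. A useful sanity check is that the same \enquote{dilation} mechanism underlies the definition of the R$_3$ co-orientation in Definition~\ref{def_epsilon}, so one can expect the combinatorics here to mirror, term for term, the argument already used to justify that definition.
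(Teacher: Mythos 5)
Your reduction is exactly the one the paper uses: the lemma amounts to showing that every triangle relator $\nabla$ satisfies $\<\nabla,\gamma\>=0$ for every germ $\gamma\in\mathfrak{G}_1$, and only $3$-germs matter since only they contribute partial germs to $I(\gamma)$. (The paper actually proves the stronger statement that a \emph{formal} $3$-germ annihilates all triangle relators if and only if it is an actual $3$-germ; the lemma needs only the easy direction.) Up to this point your plan is sound and coincides with the paper's.

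The gap is precisely at the step you flag as the ``main obstacle'': the cancellation is asserted (``should cancel term by term'') rather than carried out, and the mechanism you anticipate is not the one that actually operates. Grouping the partial germs of $I(\gamma)$ into triples $\lambda$ according to which arrow of the R$_3$-triple has been deleted, the nine pairings contributing to $\left<\nabla,T(\lambda)\right>$ do not cancel three ways around the dilated triangle: either all nine vanish, or exactly \emph{two} are nonzero (the two partial arrow germs at the bottom of Fig.~\ref{trirel} cannot both occur in $\lambda$), and those two cancel against each other. The facts that make the signs work are (i) the two relevant partial arrow germs carry opposite coefficients in $\nabla$, (ii) the product of the signs of the non-distinguished arrows is the same for every member of $\lambda$, and (iii) $\varepsilon(G_1)=+1$ for each partial germ occurring in $\nabla$. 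Granting these, the vanishing of $\left<\nabla,T(\lambda)\right>$ is \emph{equivalent} to $\mathrm{w}(e)\varepsilon(e)$ taking the same value on the visible edges of the triple --- which is exactly Condition~$1$ in the definition of R$_3$-moves in Section~\ref{sec:notations}. So the decisive input is not a routine unwinding of the orientation of the dilated triangle but an appeal to the defining criterion of R$_3$-moves; without identifying the cancellation condition with that criterion, the argument is incomplete.
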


This is a consequence of a deeper fact: let $\alpha=(A^0,A^1)$ denote an arrow germ. Assigning sign decorations to the arrows as usual with a map $\sigma:\left\lbrace 1,\ldots,\operatorname{deg}(A^0)\right\rbrace\rightarrow\left\lbrace \pm1\right\rbrace $, consistently for the two diagrams, one obtains a \textit{formal $3$-germ} $\alpha^\sigma$, which may or may not actually be a $3$-germ (\ie come from an $\mathrm{R}_3$ move). 
It happens that the triangle relations and our pairing $\<,\>$ detect that:

\begin{lemma}With the above notations, $\alpha^\sigma$ is an actual $3$-germ if and only if
for every triangle relator $\nabla$ (Fig.\ref{trirel}), $$
\<  \nabla,\alpha^\sigma\>=0.
$$
\end{lemma}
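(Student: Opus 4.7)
The plan is to reduce the claim to the two criteria of the R$_3$-move definition: Condition 1 on the sign $\mathrm{w}(e)\varepsilon(e)$ being constant across the three visible edges, and Condition 2 on the pairwise distinctness of $\uparrow\!(e)$. Condition 2 depends only on the combinatorial data of the underlying arrow germ $(A^0,A^1)$ and not on the sign decoration $\sigma$, so it holds automatically for any formal $3$-germ $\alpha^\sigma$ by hypothesis. This reduces the lemma to showing that Condition 1 holds for $\alpha^\sigma$ if and only if $\<\nabla,\alpha^\sigma\>=0$ for every triangle relator $\nabla$.

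Next I would unpack the pairing via $\<\nabla,\alpha^\sigma\>=\langle S(\nabla),I(\alpha^\sigma)\rangle$. Since $\nabla\in\mathfrak{A}^\Lambda$ is a linear combination of partial arrow germs -- two-arrow objects obtained by erasing one of the three distinguished arrows -- the only pieces of $I(\alpha^\sigma)$ that can possibly contribute are the three partial subgerms of $\alpha^\sigma$, each obtained by removing one distinguished arrow together with its match in the other diagram. The pairing with any such partial germ picks up, via the adjoint $T$, the product of the signs of the two arrows still present; this product is exactly $\mathrm{w}(e)$ for the corresponding distinguished edge $e$ of $\alpha^\sigma$.

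The crucial identification is that the three $\pm 1$ coefficients of $\nabla$ read off from Fig.~\ref{trirel} coincide with the $\varepsilon$-signs attached to the three dilated boundary strata of an R$_3$ cell. Granting this (to be verified case-by-case directly from the figure), the computation yields
\[
\<\nabla,\alpha^\sigma\>=\mathrm{w}(e_i)\varepsilon(e_i)-\mathrm{w}(e_j)\varepsilon(e_j)
\]
for a pair of edges $(e_i,e_j)$ determined by $\nabla$. Letting $\nabla$ range over the two triangle relators (the one of Fig.~\ref{trirel} and its orientation-reversed companion) together with their obvious variants generates enough such differences to enforce $\mathrm{w}(e_1)\varepsilon(e_1)=\mathrm{w}(e_2)\varepsilon(e_2)=\mathrm{w}(e_3)\varepsilon(e_3)$, which is exactly Condition 1.

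The main obstacle will be the consistent handling of sign conventions: verifying from Fig.~\ref{trirel} that the coefficients entering a triangle relator really are the $\varepsilon$-signs (and not their negatives), and that the arrow-removal operation on $\alpha^\sigma$ matches the partial germs present in $\nabla$ with the expected orientation. The conceptual guide for this bookkeeping is the dilated R$_3$-cell picture from the preceding remark: the triangle relator is, by design, the oriented boundary of the dilated $2$-cell, so its vanishing on $\alpha^\sigma$ is tautologically the condition for $\alpha^\sigma$ to lie on the actual R$_3$-stratum, once the co-orientations are matched up correctly.
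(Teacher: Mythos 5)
Your proposal follows essentially the same route as the paper's proof: reduce the question to Condition~1 of the R$_3$ criterion (Condition~2 depends only on the underlying arrow germ, not on $\sigma$), expand the pairing over the partial subgerms in $I(\alpha^\sigma)$, and recognize each matching relator's contribution as a difference of the quantities $\mathrm{w}(e)\varepsilon(e)$ attached to the three distinguished edges. One step is stated too narrowly, however: it is not true that only the three partial subgerms obtained by erasing a single distinguished arrow can contribute. When $\alpha$ carries non-distinguished arrows and $\nabla$ has unseen arrows, every partial subgerm retaining the appropriate set of bystanders contributes, so there are $3\cdot 2^{n-3}$ relevant terms rather than $3$. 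The paper handles this by grouping the partial subgerms into triples differing only in which distinguished arrow is missing, and observing that the product of the signs of the non-distinguished arrows is constant on each triple; each triple then contributes a common factor times $\mathrm{w}(e_i)\varepsilon(e_i)-\mathrm{w}(e_j)\varepsilon(e_j)$, and the bracket vanishes triple by triple. Consequently your displayed formula for $\<\nabla,\alpha^\sigma\>$ as a single such difference is literally correct only for degree-two relators --- which is exactly, and sufficiently, what the paper uses for the converse direction, but the forward direction must also dispose of the higher-degree relators via the triple grouping. With that adjustment, and the case-by-case sign check from the figure that you correctly defer (the paper's version: the two matching terms of $\nabla$ carry opposite coefficients, and $\varepsilon(G_1)=+1$ for every term as drawn), your argument coincides with the paper's.
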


\begin{proof} Note that the definition of the pairing $\< , \>$ obviously extends to formal $3$-germs, so that the lemma makes sense:
$$
\<  \nabla,\alpha^\sigma\>= \left< \nabla,\TI(\alpha^\sigma)\right> .
$$ 
We look only at partial $3$-germs in the sum $I(\alpha^\sigma)$ since the other terms clearly do not take part in the computation. These can be grouped in triples, in which any two partial germs differ only by the arrow that they miss from an R$_3$ triangle (see an example on Fig.~\ref{trirelsigned}).

\begin{figure}[h!]
\centering 
\psfig{file=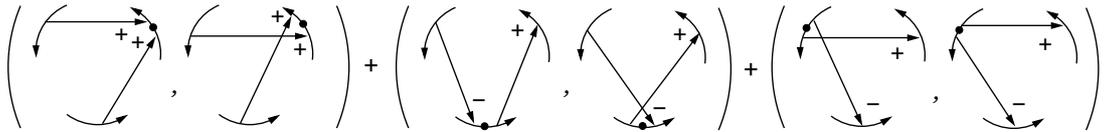,scale=1}
\caption{One possible triple in the sum $I(\alpha^\sigma)$}\label{trirelsigned} 
\end{figure}

Pick such a triple, say $\lambda$, and consider the sum $\left<  \nabla,T(\lambda)\right>$. It has $3*3=9$ terms, and it is easy to see that either they are all zero, or two of them are non zero. In the latter case, we make the following observations.

$1.$ The two partial arrow germs non trivially involved appear in $\nabla$ with opposite coefficients - indeed, the two at the bottom of Figure~\ref{trirel} cannot both appear in $\lambda$. 

$2.$ The product of the signs of the non-distinguished arrows is the same for all germs on Fig.~\ref{trirelsigned}.

$3.$ For each partial germ $(G_0, G_1)$ from Fig.~\ref{trirel}, one has $\varepsilon (G_1)=+1$. 

Thus $\left<  \nabla,T(\lambda)\right>$ is zero if and only if computing $\varepsilon$ times the product of signs of the visible arrows gives the same result for both relevant diagrams in $\lambda$, which is precisely a consequence of the fact that $\lambda$ comes from an $\mathrm{R}_3$ move (Section~\ref{sec:notations}). This proves the \enquote{only if} part.\\

For the converse, pick a $\nabla$ relation of degree two (\ie without unseen arrows in Fig.~\ref{trirel}). Then the only relevant terms in the sum $I(\alpha^\sigma)$ are those obtained by forgetting all arrows but two from the R$_3$ triple. There are only three such partial germs. The computation is the same as before, and the assumption that the bracket is zero allows one to conclude, using again the description of $\mathrm{R}_3$ moves from Section~\ref{sec:notations}.
\end{proof} \subsubsubsection{The Stokes formula}

\begin{theorem}[Stokes Formula]\label{Stokes}
For all $\mathcal{A}\in \mathfrak{A}$ and $\gamma\in \mathfrak{G}_1$,

$$\< d \mathcal{A},\gamma\>=\< \mathcal{A},\partial \gamma\>.$$
\end{theorem}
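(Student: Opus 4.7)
The plan is to reduce by bilinearity to the case where $\mathcal{A}=A$ is a single arrow diagram and $\gamma=(G_0,G_1)$ is a single R$_i$-germ, and then to unwrap both sides by the defining formulas $\<dA,\gamma\>=\left<S(dA),I(\gamma)\right>$ and $\<A,\partial\gamma\>=\left<S(A),I(G_1)-I(G_0)\right>$. The equality then reduces to a weight-for-weight combinatorial bookkeeping between subgerms of $\gamma$ matching $A$ and subdiagrams of $G_0,G_1$ matching $A$; I would carry it out by case analysis on $i=1,2,3$.

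For $i=1$, the only subdiagrams of $G_1$ that are not subdiagrams of $G_0$ are those containing the new isolated arrow $a$; they are in obvious bijection with the subgerms $(H_0,H_0\cup\{a\})$ of $\gamma$, and each of them matches $A$ with the same weight as the corresponding subgerm matches one of the R$_1$-germs produced by $d^I A$ (indexed by the isolated arrow of $A$ that plays the role of $a$). For $i=2$, the same cancellation applies to subdiagrams omitting both distinguished arrows $a,b$; subdiagrams containing exactly one of them come in canonical pairs that are isomorphic as based Gauss diagrams but carry opposite signs (because $a$ and $b$ have opposite writhes), hence cancel after pairing with $S(A)$; only subdiagrams containing both new arrows survive, and they match the germs contributing to $d^{I\!I}A$. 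For $i=3$, the key observation is that the R$_3$ move only permutes the three moving endpoints within a single contiguous region of the circle, so any subdiagram containing at most one of the three distinguished arrows is identical in $G_0$ and $G_1$ (the only combinatorial data moved by R$_3$ is destroyed as soon as two of the three moving endpoints are removed). What survives in $I(G_1)-I(G_0)$ is therefore exactly the boundary of the sum of full R$_3$-subgerms (matching $d^\Delta A$) and of partial subgerms obtained by removing one distinguished arrow from the triangle (matching $d^\Lambda A$).

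The hardest part is this $i=3$ case, for two reasons: partial subgerms enter the picture, and $d^\Lambda A$ only makes sense modulo the triangle relations in $\mathfrak{A}^\Lambda/\nab$. Well-definedness of the pairing on this quotient has already been established in Lemma \ref{cruciallemma1}, so no extra verification is needed there; what remains is to check that, for each pair of distinguished arrows surviving in a partial subgerm, the co-orientation $\mathrm{w}\cdot\varepsilon$ of Definition \ref{def_epsilon} delivers exactly the sign produced by $G_1[\sigma]-G_0[\sigma]$ on the RHS. This last verification is a direct computation using the adjointness of $S$ and $T$ and the fact that the sum in the definition of $d$ is precisely the one needed to pair with each subgerm type. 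Once this is done, the three cases assemble to the claimed equality $\<d\mathcal{A},\gamma\>=\<\mathcal{A},\partial\gamma\>$.
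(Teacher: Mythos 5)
Your proposal follows essentially the same route as the paper: reduce by bilinearity, expand both sides via $S$ and $I$, and match subgerms against subdiagrams case by case, using the same two key cancellations (subdiagrams missing all distinguished arrows cancel between $G_0$ and $G_1$; for R$_2$, subdiagrams keeping exactly one of the two opposite-writhe arrows cancel after pairing with the alternating sum $S(A)$; for R$_3$, subdiagrams retaining at most one arrow of the triple are common to $G_0$ and $G_1$) and the same appeal to Lemma~\ref{cruciallemma1} for well-definedness modulo the triangle relations. The only superfluous step is your final verification against the co-orientation $\mathrm{w}\cdot\varepsilon$: the signs on both sides come directly from the ordered-pair convention $\partial(\tilde{G}_0,\tilde{G}_1)=\tilde{G}_1-\tilde{G}_0$ together with the antisymmetry relation $(x,y)=-(y,x)$, so no separate comparison with Definition~\ref{def_epsilon} is needed.
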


\begin{remark}
As an immediate corollary, we see that $\operatorname{Ker}d$ is exactly the set of arrow diagram formulas -- \ie the elements of $\mathfrak{A}$ that define knot invariants. A result of this kind was already proved by different means in \citep{MortierPolyakEquations} and \citep{VKTG}. It is easy to see that the map $d$ constructed in these earlier works is isomorphic with our map $d^\Lambda$. Also, let us mention that the important Lemma~$3.2$ from \citep{MortierPolyakEquations} can be restated in the present language as
$$\operatorname{Ker} d^\Delta \subset\operatorname{Ker} d^\Lambda \cap \operatorname{Ker} d^{I\!I}.$$
\end{remark}

\begin{proof}

By bilinearity it is enough to prove the formula for an arrow diagram $A$ and a germ $\gamma$.

First assume that $\gamma$ is a $3$-germ. Then $\< d \mathcal{A},\gamma\>=\< d^\Delta\mathcal{A}+ d^\Lambda\mathcal{A}  ,\gamma\>$.
Choose a map $\sigma$ that adds a sign to each arrow of $A$, and (consistently) to each arrow in each diagram in the sum $d A$ -- for this consistency to make sense, we use the particular representative of $d^\Lambda A$ that was constructed in Definition~\ref{def:coboundary}
before it was pushed modulo the triangle relations.

If $\tilde{\gamma}=( \tilde{G}_0, \tilde{G}_1)$ is a partial subgerm of $\gamma$, then:
\begin{equation}\label{stok1}
\left<  \left( d A\right) ^\sigma,\tilde{\gamma}\right>=\left<  \left( d^\Lambda A\right) ^\sigma,\tilde{\gamma}\right>=\left<   A^\sigma,\tilde{G}_1- \tilde{G}_0\right>.
\end{equation}
Indeed, the first equality is obvious, and as for the second,
\begin{enumerate}
\item If $A^\sigma$ is different from both $\tilde{G}_0$ and $\tilde{G}_1$, then both sides of the equality are zero.
\item If $A^\sigma\in\left\lbrace \tilde{G}_0, \tilde{G}_1\right\rbrace $, then there is exactly one term in the sum $d^\Lambda A$ that coincides with  either $\tilde{\gamma}$ or $-\tilde{\gamma}$, and the sign is $+1$ if $A^\sigma=G_1$, $-1$ otherwise.

\end{enumerate}

Similarly, if $\tilde{\gamma}=( \tilde{G}_0, \tilde{G}_1)$ is a subgerm of $\gamma$, then:
\begin{equation}\label{stok2}
\left<  \left( d A\right) ^\sigma,\tilde{\gamma}\right>=\left<  \left( d^\Delta A\right) ^\sigma,\tilde{\gamma}\right>=\left<   A^\sigma,\tilde{G}_1- \tilde{G}_0\right>.
\end{equation}

Finally, notice that if $\tilde{G}$ is a subdiagram of $G_1$ in which less than two arrows remain from the R$_3$ triple, then $\tilde{G}$ is a subdiagram of $G_0$ as well and the corresponding contribution to $\left<  A,\partial \gamma\right>$ is $0$. By definition of a subgerm, one never erases more than one arrow from the R$_3$ triple, so these diagrams do not contribute on the left-hand-side either.

It follows that the Stokes formula is the sum of the equations \eqref{stok1} and \eqref{stok2} (with both sides multiplied by $\operatorname{sign}(\sigma)$) over all possible choices of $\sigma$ and $\tilde{\gamma}$.\\

Now assume that $\gamma=(G_0, G_1)$ is an $i$-germ with $i=1$ or $2$. We treat only the case $i=1$, the other case is similar. Assume without loss of generality that $G_1$ has more arrows than $G_0$. Then $I(\partial\gamma)$ is the formal sum of all subdiagrams of $G_1$ such that the distinguished arrow has not been removed. Indeed, all other subdiagrams of $G_1$ are in $1$-$1$ correspondence with the subdiagrams of $G_0$, and happen in $I(\partial\gamma)$ with opposite coefficients. 

Again let $\sigma$ be a particular completion of $A$ into a Gauss diagram, and let $\tilde{\gamma}=( \tilde{G}_0, \tilde{G}_1)$ be a subgerm of $\gamma$. We shall explain why
\begin{equation}\label{stok3}
\left<  \left( d A\right) ^\sigma,\tilde{\gamma}\right>= \left<  \left( d^I A\right) ^\sigma,\tilde{\gamma}\right>=\left<   A^\sigma,\tilde{G}_1\right>.
\end{equation}
The first equality is obvious. As for the second, recall that $d^I A$ is the sum of all arrow $1$-germs $(A^0,A^1)$ such that $A^1=A$ and $A^1$ has more arrows than $A^0$. Hence $\left<  \left( d^I A\right) ^\sigma,\tilde{\gamma}\right>$
is equal to $1$ if $A^\sigma= \tilde{G}_1$ and to $0$ otherwise. The second equality follows.

As before, the Stokes formula is the alternate sum of equalities \eqref{stok3} over all possible choices of $\sigma$ and $\tilde{\gamma}$.
\end{proof}

\section{General method for producing $1$-cocycle formulas}
\label{sec:method}
From the viewpoint of singularity theory, the Reidemeister moves are the $1$-codimensional strata in a stratification of the space of knots induced by the choice of a projection $\RR^3\rightarrow\RR^2$. Hence, for a linear combination of arrow germs (which is a $1$-cochain defined as an intersection form with some of those strata) to define a $1$-cocycle, it has to vanish on the meridians of the codimension $2$ strata, \ie the boundary of transversal $2$-discs. The description of those strata,  which are the \enquote{simplest} degeneracies of Reidemeister moves, is known as a \textit{higher order Reidemeister theorem} \citep{Fiedler1parameter, Fiedler1parameterPolynomials}, but it also appears as a set of elementary moves in the study of surfaces embedded in $4$-space, known as \textit{Roseman moves} \cite{CarterKamadaSaito}. 

Fortunately, we will see in Section~\ref{sec:reduction}
that most of these strata may be ignored for our purposes (in fact all but three of them), which gives the associated system of equations a reasonable size.

\subsection{Codimension $2$ strata and the associated equations}\label{sec:strata}

Let us recall the different types of strata, together with the corresponding meridians. In the description, we will say that two Reidemeister pictures are \textit{adjacent} if they share exactly one common crossing. As usual, we use the notations of type \enquote{R-I} for Reidemeister moves of knot diagrams, and \enquote{R$_1$} for R-moves of Gauss diagrams.

\paragraph*{1.} Transverse intersection between two $1$-codimensional strata. It is the situation where two Reidemeister moves are far from each other, so that one can choose in what order to perform them. These strata will be respectively denoted by \includegraphics[scale=0.6]{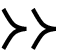}, \includegraphics[scale=0.6]{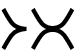}, \includegraphics[scale=0.6]{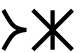}, \includegraphics[scale=0.65]{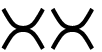}, \includegraphics[scale=0.6]{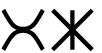}, \includegraphics[scale=0.6]{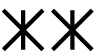}.
For instance, the meridian of a stratum of type \includegraphics[scale=0.6]{R1R2.eps} looks like the following:
\begin{figure}[h!]
\centering 
\psfig{file=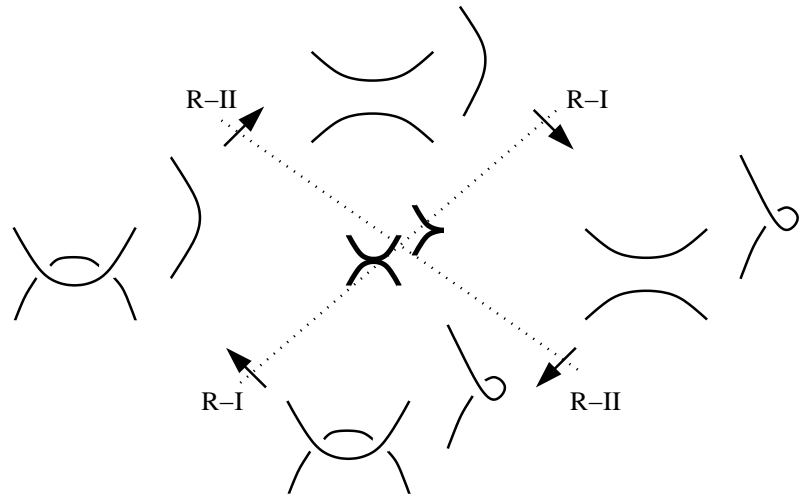,scale=1}
\caption{Meridian of a transverse intersection between $1$-codimensional strata}\label{pic:meridR1R2} 
\end{figure}

\paragraph*{2.} Degenerate cusp -- denoted by \includegraphics[scale=0.6]{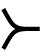}. It is the limit of an R-I and an R-II moves that are adjacent to each other and tend to degenerate at the same time. The meridian is as follows:

\begin{figure}[h!]
\centering 
\psfig{file=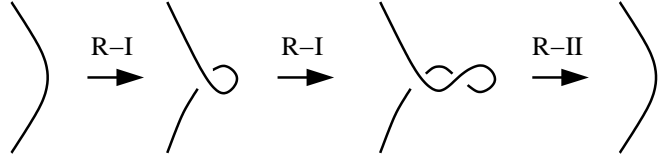,scale=1}
\caption{Meridian of a degenerate cusp}\label{pic:meriddegencusp} 
\end{figure}

This meridian has a particular significance: it is the only one which contains only one R-II move. Therefore it is responsible for the fact that the algebraic number of R-II moves that happen in a loop is not a $1$-cocycle. Roughly speaking, if you count R-II moves and if you want a $1$-cocycle, then you have to count R-I moves as well.
\paragraph*{3.} Cusp with a transverse branch -- denoted by  \includegraphics[scale=0.6]{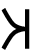}. It happens when an R-III move and an R-I move are adjacent to each other and degenerate simultaneously. Just like the degenerate cusp, this meridian has a unique particularity: it is the only one featuring exactly one R-III move.
 \begin{figure}[h!]
\centering 
\psfig{file=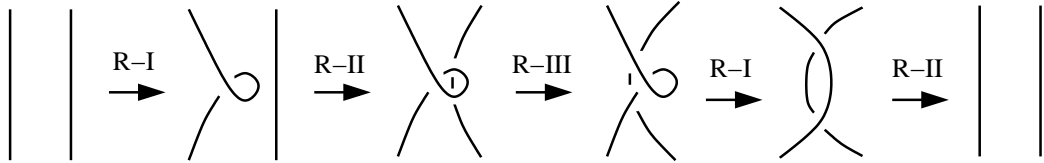,scale=1}
\caption{Meridian of a cusp with a transverse branch}\label{pic:meridtranscusp} 
\end{figure}

\paragraph*{4.} Tangency in an inflection point -- denoted by \includegraphics[scale=0.6]{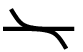}. It is the degeneracy of two R-II moves adjacent to each other.
The meridian has only two Reidemeister moves:

\begin{figure}[h!]
\centering 
\psfig{file=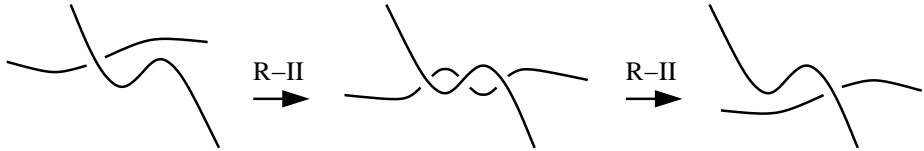,scale=1}
\caption{Meridian of a tangency in an inflection point}\label{pic:meriddegtangency} 
\end{figure}

\paragraph*{5.} Regular tangency with a transverse branch -- denoted by \includegraphics[scale=0.6]{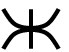}. It is the simultaneous degeneracy of an R-III move and an adjacent R-II move.
\begin{figure}[h!]
\centering 
\psfig{file=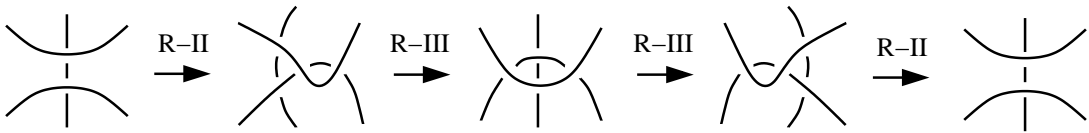,scale=1}
\caption{Meridian of a regular tangency with a transverse branch -- the \textit{cube} equation}\label{pic:meridcube} 
\end{figure}
\begin{remark}
As we will see later (Proposition~\ref{prop:R1R2}), R-I and R-II moves can be safely ignored in our theory. Hence, the heart of this equation is the fact of relating two Reidemeister III moves that share a common edge (as in the middle picture from Fig.\ref{pic:meridcube}).  Note that Fig.\ref{cube} contains simultaneously all the possible local situations. Also, one can see on this picture the polyhedron which is dual to the cube that gave its name to the \includegraphics[scale=0.6]{cubestratum.eps}-equations \citep{Fiedler1parameter, FiedlerQuantum1cocycles}.
\end{remark}
\newpage

\paragraph*{6.} Regular quadruple point -- denoted by \includegraphics[scale=0.6]{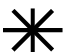}.
It is the simultaneous degeneracy of two adjacent R-III moves.

\begin{figure}[h!]
\centering 
\psfig{file=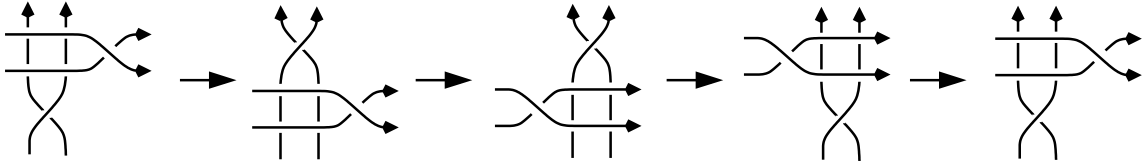,scale=1}
\caption{The meridian of a (positive, braid-like) quadruple point -- the \textit{tetrahedron} equation. Each step is made of two R-III moves.}\label{pic:meridquad} 
\end{figure}

Note that instead of simultaneous degeneracies of Reidemeister pictures, one can think of \includegraphics[scale=0.6]{transcusp.eps}, \includegraphics[scale=0.6]{cubestratum.eps} and \includegraphics[scale=0.6]{quadruple.eps} as one branch sliding over (or under, or between) the branches of a Reidemeister move.

\subsubsection*{How to write down the equations associated with each stratum}
We apply the same process that was used to show that the Polyak algebra is a complete set of equations for Goussarov-Polyak-Viro's invariants of virtual knots \citep{GPV}. No distinction is made between a meridian and the corresponding formal sum of germs in $\mathfrak{G}_1$.

\begin{definition}
\label{def:bystanders}
Let $m\in \mathfrak{G}_1$ be a meridian. The arrows corresponding to unseen crossings in Figs.\ref{pic:meridR1R2} to \ref{pic:meridquad} are called \emph{bystanders}. Roughly speaking, they are arrows which never become distinguished.
\end{definition}

Fix a meridian $m\in \mathfrak{G}_1$. The sum $I(m)$ naturally splits as $$I(m)=\sum  I(m;s)$$ where s runs over all subsets of the set of bystanders, and $I(m;s)$ is the formal sum of subgerms extracted from $m$ where exactly all bystanders that do not lie in $s$ have been removed.

\begin{lemma}\label{lem:IMS}
For any $m$ and $s$ as above, $I(m;s)$ is the image under the map $I$ of a linear combination of meridians.
\end{lemma}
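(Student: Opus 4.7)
\medskip

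\noindent\textbf{Proof proposal.} The plan is to identify $I(m;s)$ with a Möbius-style inclusion–exclusion in the Boolean lattice of subsets of the bystanders of $m$. For a subset $s'$ of bystanders of $m$, denote by $m_{s'}$ the formal sum of germs obtained from $m$ by simply deleting from every picture the bystander arrows that are not in $s'$ (and keeping everything else intact). Geometrically, forgetting a bystander amounts to looking at the same loop in a space of long knots with one fewer transverse crossing of the projection, so $m_{s'}$ is again a meridian of a codimension $2$ stratum of the same type as $m$ (one just has to inspect Figs.~\ref{pic:meridR1R2}--\ref{pic:meridquad} to see that removing crossings which are disjoint from the distinguished data yields the meridian of the analogous stratum in a simpler local picture).

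The key step is the identity
\begin{equation}\label{eq:I-ms-decomp}
I(m_{s'}) \;=\; \sum_{s'' \subseteq s'} I(m;s'').
\end{equation}
First I would check this by unpacking the definitions: a subgerm appearing in $I(m_{s'})$ is obtained from $m_{s'}$ by erasing some arrows subject to the subgerm constraints, and since the arrows kept among $s'$ can be any subset $s''$ of $s'$, the resulting subgerm has exactly $s''$ as its set of surviving bystanders; conversely a subgerm with surviving bystander set $s'' \subseteq s'$ can be produced identically from $m$ or from $m_{s'}$. Grouping the subgerms by their surviving bystander set yields \eqref{eq:I-ms-decomp}.

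Now \eqref{eq:I-ms-decomp} is a triangular linear system indexed by the Boolean lattice of subsets of $s$, and Möbius inversion gives
\begin{equation}\label{eq:Mobius}
I(m;s) \;=\; \sum_{s' \subseteq s} (-1)^{|s|-|s'|}\, I(m_{s'}) \;=\; I\!\left( \sum_{s' \subseteq s} (-1)^{|s|-|s'|}\, m_{s'}\right),
\end{equation}
where the last equality uses linearity of $I$. Since each $m_{s'}$ is a meridian, the right-hand side of \eqref{eq:Mobius} exhibits $I(m;s)$ as $I$ of a linear combination of meridians, as required.

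The only real point to verify is the geometric assertion that $m_{s'}$ really is a meridian of a codimension $2$ stratum of one of the types enumerated in Section~\ref{sec:strata}; this is the part I would write most carefully, doing a case-by-case check on the list 1--6 that erasing bystanders from each picture in Figs.~\ref{pic:meridR1R2}--\ref{pic:meridquad} produces exactly the meridian of the corresponding stratum with fewer ambient crossings. Once this is granted the Möbius inversion is a one-line calculation, and I expect no further obstacle.
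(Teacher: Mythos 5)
Your proof is correct and follows essentially the same route as the paper: the paper's proof consists precisely of the inclusion--exclusion identity $I(m;s)=\sum_{s'\subseteq s}(-1)^{\sharp(s\setminus s')}I(m_{s'})$, which is exactly your equation obtained by M\"obius inversion. You merely make explicit the two steps the paper leaves as \enquote{one readily checks}, namely the decomposition $I(m_{s'})=\sum_{s''\subseteq s'}I(m;s'')$ and the fact that each $m_{s'}$ is again a meridian.
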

\begin{proof}
One readily checks that
$$I(m;s)=\sum_{s^\prime \subset s} (-1)^{\sharp s\setminus s^\prime}I(
m_{s^\prime}),$$
where $m_{s^\prime}$ is the meridian obtained from $m$ by removing all bystanders that do not lie in $s^\prime$.
\end{proof}

The expressions $I(m;s)$ are the analogue of the relations defining the Polyak algebra in \citep{GPV}. Here it is possible to go a little further.

\begin{definition}We call a \emph{$1$-cocycle equation} any element of $\mathfrak{A}^1$ that is the image under $\TI$ of a linear combination of meridians. 
By definition, an element $\alpha\in \mathfrak{A}^1$ defines a $1$-cocycle if and only if it \emph{satisfies} (\ie is orthogonal to) all $1$-cocycle equations.\end{definition}
By Lemma~\ref{lem:IMS}, the space of equations is exactly spanned by the elements of the form $T(I(m;s))$.

\begin{definition}
The \textit{degree}
of an arrow germ $\alpha=(A^0, A^1)$ is defined by $$\operatorname{deg}
(\alpha)=
\operatorname{max} (\operatorname{deg} (A^0), \operatorname{deg} (A^1)).$$
\end{definition}

\begin{proposition}\label{homogeneous}
The subspace of $1$-cocycle equations admits a family of homogeneous generators (in the sense of the degree defined above). More precisely, any homogeneous part of an equation $T(I (m;s))$ is itself a $1$-cocycle equation.
\end{proposition}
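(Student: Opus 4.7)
The plan is to show that the degree grading of $\mathfrak{A}^1$ interacts nicely with the structure of $I$, in the sense that the homogeneous pieces of $T(I(m;s))$ can be isolated by manipulations that stay inside the equation span.

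First I would analyze the possible degrees in $T(I(m;s))$. Every subgerm appearing in $I(m;s)$ has exactly $|s|$ bystanders, and the number of distinguished arrows is determined by the type of subgerm: $1$ for an R$_1$ subgerm, $2$ for an R$_2$ subgerm, $3$ for a full R$_3$ subgerm, and $2$ for a partial R$_3$ subgerm. So the possible degrees are $|s|+1$, $|s|+2$ and $|s|+3$, and moreover each degree-class matches up with the direct-sum decomposition $\mathfrak{A}^1=\mathfrak{A}^I\oplus\mathfrak{A}^{I\!I}\oplus\mathfrak{A}^\Delta\oplus\mathfrak{A}^\Lambda/\nab$: the degree-$(|s|+1)$ piece is pure in $\mathfrak{A}^I$, the degree-$(|s|+3)$ piece is pure in $\mathfrak{A}^\Delta$, and the degree-$(|s|+2)$ piece lies in $\mathfrak{A}^{I\!I}\oplus\mathfrak{A}^\Lambda/\nab$.

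Next I would apply Lemma~\ref{lem:IMS} to rewrite $T(I(m;s))=\sum_{s'\subset s}(-1)^{|s\setminus s'|}T(I(m_{s'}))$. Since degree-projection is a linear operation, it commutes with this sum, so extracting the homogeneous piece of $T(I(m;s))$ reduces to extracting the same piece from each $T(I(m_{s'}))$. Thus it suffices to prove the statement for equations of the form $T(I(m'))$ where $m'$ is a meridian with a prescribed set of bystanders. I would then proceed by induction on the number of bystanders of $m'$; in the induction step, adding or removing a bystander $b$ uniformly shifts the degree of every subgerm by one, so comparing $T(I(m'))$ with $T(I(m'\setminus\{b\}))$ allows one to strip off terms in a controlled way and isolate the contributions from each move-type.

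The base of the induction is a meridian with no bystanders, in which case one must check directly, going through the six types of codimension-$2$ strata listed in Section~\ref{sec:strata}, that each homogeneous piece of $T(I(m'))$ can be realized as a linear combination of equations $T(I(m'';s''))$ for suitable auxiliary meridians $m''$ (possibly trivial ones, where a Reidemeister move is decoupled from the others by inserting a far-away arrow). The main obstacle is the degree $|s|+2$ piece, which mixes genuine R$_2$ contributions in $\mathfrak{A}^{I\!I}$ with partial R$_3$ contributions in $\mathfrak{A}^\Lambda/\nab$; here the triangle relations play a key role, since they precisely encode the fact that the partial-germ part of an R$_3$ equation is already well-defined modulo $\nab$, allowing the $\mathfrak{A}^{I\!I}$ and $\mathfrak{A}^\Lambda/\nab$ contributions to be treated independently despite sharing the same total degree.
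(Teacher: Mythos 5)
There is a genuine gap, and it starts with your degree count. You sort the arrows of a meridian into bystanders and distinguished arrows, but a meridian also contains arrows of a third kind: those that are distinguished in \emph{some} of its germs and appear as ordinary, removable arrows in the others (so they are not bystanders in the sense of Definition~\ref{def:bystanders}, yet they contribute to the degree of a subgerm whenever they are retained). In a meridian where an R$_1$- or R$_2$-move interacts with an R$_3$-move, the R$_1$-arrow (or R$_2$-pair) sits as an ordinary arrow inside the $3$-germs, and the R$_3$-triple sits as ordinary arrows inside the $1$- or $2$-germs. After the cancellations around the loop, the surviving terms of $T(I(m;s))$ occupy two consecutive degrees according to how many arrows of the R$_3$-triple are retained, and a given homogeneous piece mixes germs of different types (for instance $1$-germs retaining all three triple arrows together with full $3$-germs retaining the R$_1$-arrow). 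So the homogeneous decomposition does not align with $\mathfrak{A}^I\oplus\mathfrak{A}^{I\!I}\oplus\mathfrak{A}^\Delta\oplus\mathfrak{A}^\Lambda/\nab$, and the real difficulty is not the one you single out (disentangling $\mathfrak{A}^{I\!I}$ from $\mathfrak{A}^\Lambda/\nab$) but separating, within one and the same summand, terms that retain different numbers of these non-bystander ordinary arrows.

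Second, your reduction does not produce new equations. Lemma~\ref{lem:IMS} writes $I(m;s)$ as a combination of the $I(m_{s'})$, which only confirms that $T(I(m;s))$ lies in the span of the equations; taking homogeneous parts on both sides tells you nothing about whether those parts themselves lie in that span, which is exactly what must be proved. Likewise the induction on bystanders is not well defined: $I(m')$ sums over \emph{all} subsets of bystanders, so deleting a bystander from a meridian does not shift every degree uniformly. The missing idea --- which is the whole of the paper's proof --- is to exhibit a second genuine meridian $m'$ whose equation agrees with $T(I(m;s))$ in one degree and is its negative in the other: flip the signs of all three arrows of the R$_3$-triple (this is again a meridian by the R$_3$ criterion of Section~\ref{sec:notations}). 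Since $T$ converts the product of signs into a coefficient, terms retaining all three triple arrows change sign while terms retaining only two do not, and the half-sum and half-difference of $T(I(m;s))$ and $T(I(m';s))$ are precisely the homogeneous pieces, each manifestly the image under $\TI$ of a linear combination of meridians. Your proposal contains no mechanism of this kind, so the base case of your induction is essentially the unproved proposition itself.
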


\begin{proof}
For meridians of type \includegraphics[scale=0.6]{R1R1.eps}, \includegraphics[scale=0.6]{R1R2.eps}, \includegraphics[scale=0.65]{R2R2.eps}, \includegraphics[scale=0.6]{degencusp.eps} and \includegraphics[scale=0.6]{degtangency.eps}, the equations $T(I (m;s))$ are themselves already homogeneous.

Choose a meridian $m$ of type \includegraphics[scale=0.6]{R1R3.eps} or \includegraphics[scale=0.6]{R2R3.eps}: the equations $T(I (m;s))$ have terms in only two different consecutive degrees, depending on whether or not an arrow from the R$_3$-triple has been removed. Now let $m^\prime$ be the meridian obtained from $m$ by changing the signs of all arrows involved the R$_3$-move (and do so consistently in all germs of $m$); $m^\prime$ is indeed a meridian, by the criterion for R$_3$-moves given in Section~\ref{sec:notations}.

One sees that the homogeneous part of lower degree in $T(I (m^\prime;s))$ is equal to that of $T(I (m;s))$ (because those parts involve exactly two arrows from the R$_3$ triple), while their parts of higher degree are opposite. Hence 
$$T(I (m;s))=\frac{T(I (m;s))+T(I (m^\prime;s))}{2}+\frac{T(I (m;s))-T(I (m^\prime;s))}{2}$$
is the decomposition of $T(I (m;s))$ in homogeneous parts, which proves the lemma for types \includegraphics[scale=0.6]{R1R3.eps} and \includegraphics[scale=0.6]{R2R3.eps}. 

The proof for the remaining types does not require more ideas.
\end{proof}

\subsection{Reduction of the set of equations} \label{sec:reduction}
Before writing down  the equations associated with all $2$-strata, we are going to show that some equations, as well as some variables, may be ignored.

\subsubsection*{Keeping only one tetrahedron equation}

In the same spirit as small \enquote{sets} of Reidemeister moves can generate all Reidemeister moves (as is shown in \citep{P2}), one naturally expects some $2$-meridians to express as linear combinations of others. Since the equations derived from \includegraphics[scale=0.6]{quadruple.eps} are by far the most numerous and the most complicated, we try to get rid of them in priority. 
In the meridian from Fig.\ref{pic:meridquad}, one could change the orientation and the \enquote{level} (along the projection axis) of each strand. It makes a total of $48$ combinatorially different meridians. Following \cite{FiedlerQuantum1cocycles}, we call the one presented on Fig.\ref{pic:meridquad} \textit{positive braid-like}, because all visible crossings have a positive writhe and because the strand orientations do not forbid that this meridian happens for braids.

The following is proved in \citep{FiedlerQuantum1cocycles}.

\begin{lemma}
The positive braid-like \includegraphics[scale=0.6]{quadruple.eps}-meridian, together with the meridians of other types of $2$-strata, generates all other \includegraphics[scale=0.6]{quadruple.eps}-meridians.
\end{lemma}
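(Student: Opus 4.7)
The plan is to express each of the $48$ combinatorial classes of quadruple-meridians as a linear combination of the positive braid-like one and meridians of the five other types of $2$-strata. These classes are parametrized by two pieces of data at the quadruple point: the cyclic ordering of the four strands by height along the projection axis, and the orientation of each strand, taken modulo the obvious rotational and reflective symmetries of the local configuration. I would reduce by two independent elementary operations: adjacent height swaps and single strand-orientation reversals, and iterate these until the target configuration is reached.

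For the first operation, recall that a quadruple-meridian is a hexagonal loop built of six R-III moves (Fig.\ref{pic:meridquad}), and each vertex of the hexagon is an R-III move among a triple of consecutive-in-height strands. Swapping the heights of two adjacent strands modifies each of these six R-III vertices locally; the correction at each vertex is precisely a meridian of a regular tangency with transverse branch, \ie a cube-meridian in the sense of Fig.\ref{pic:meridcube}. Summing the six local corrections along the hexagon produces a global identity between the two quadruple-meridians modulo such cube-meridians. Since any permutation of the four heights factors into adjacent transpositions, this reduces all height orderings to the braid-like one.

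For the second operation, reversing the orientation of a single strand at a quadruple point can be realized by inserting a small kink (an R-I move) on that strand far from the quadruple point, then homotoping the loop so that the kink travels across the singular moment. This homotopy crosses a R$_1$R$_3$-stratum, a cusp-with-transverse-branch stratum, and possibly a degenerate cusp and an R$_1$R$_2$-stratum, all of which are amongst the five \enquote{other} types of $2$-strata. The boundary of this $2$-disc of isotopies then expresses the orientation-reversed quadruple-meridian as the original one plus a combination of non-quadruple meridians, and iterating strand-by-strand handles all orientation patterns.

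The main obstacle is the sign bookkeeping in the height-swap step. Each of the six R-III moves around the hexagonal meridian carries the co-orientation sign $\mathrm{w}(e)\varepsilon(e)$ of Definition~\ref{def_epsilon}, and both the writhes and the parities $\uparrow\!(e)$ of the distinguished edges can change under a swap of adjacent heights. One must therefore verify that the six local cube-meridian corrections assemble into a coherent closed combination around the hexagon, rather than leaving a residual quadruple contribution. This is exactly the content of the combinatorial computation carried out by Fiedler in \citep{FiedlerQuantum1cocycles}, on which I would rely for the detailed sign verification rather than redoing it case by case.
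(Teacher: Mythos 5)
The paper does not actually prove this lemma: it is stated as a result imported from \citep{FiedlerQuantum1cocycles}, so your final deferral to Fiedler for the ``detailed sign verification'' is, in substance, exactly what the author does. The difficulty is that the outline you wrap around that citation is not a proof, and the concrete assertions in it contain errors. The meridian of a quadruple point is not a hexagon of six R-III moves: all $\binom{4}{3}=4$ triples of strands (not only ``consecutive-in-height'' ones) give triple-point sheets through the quadruple-point stratum, and each sheet meets the meridian circle twice, so the meridian is an octagon of eight R-III moves --- this is the tetrahedron (Zamolodchikov) relation, consistent with the caption of Fig.~\ref{pic:meridquad} (``each step is made of two R-III moves'').

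More seriously, the height-swap step is unjustified. Changing the relative height of two strands at the quadruple point is a crossing change, not an isotopy: the two local types lie in different connected components of the codimension-$2$ stratum, so there is no deformation of one meridian loop into the other that merely ``modifies each R-III vertex locally'' and leaves a cube-meridian correction at each vertex. The relation between the two meridian equations must be produced by an explicit $2$-chain whose boundary is the difference of the two meridians plus a combination of other meridians --- equivalently, by analysing the link of an adjacent codimension-$3$ stratum --- and determining exactly which other strata occur there is the entire content of the lemma, not a routine local bookkeeping step. The orientation-reversal step has the same problem: sliding a kink through the quadruple point passes through a codimension-$3$ degeneracy (a quintuple-point-like stratum) whose meridian $2$-sphere contains several further quadruple-point meridians that you would then also need to control, so the list of strata you claim is crossed is guessed rather than derived. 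As it stands your proposal reduces to ``this is proved in \citep{FiedlerQuantum1cocycles}'', which matches the paper; the added mechanism should either be carried out in detail or removed.
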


While this lemma is inherent to the topology of the space of knots, all the simplifications made hereafter rely on the particular shape of the $1$-cochains we are considering.

\subsubsection*{Getting rid of $1$-germs and $2$-germs}

In the actual computation of the knot invariants given by arrow diagram formulas, it appears that R$_1$- and R$_2$-moves do not take part in the difficulty of the problem: indeed, the kernel of $d^I\oplus d^{I\!I}$ is simply a subspace generated by some single arrow diagrams. Similarly, at the $1$-codimensional level, all the variables related with $R_1$- and $R_2$-moves can be ignored.

\begin{proposition}\label{prop:R1R2}
Any non trivial $1$-cocycle presented by arrow germs admits a presentation using only arrow $3$-germs and partial arrow $3$-germs.
\end{proposition}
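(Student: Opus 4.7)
The plan is to modify any given $1$-cocycle $\alpha = \alpha^I + \alpha^{II} + \alpha^\Delta + \alpha^\Lambda \in \mathfrak{A}^1$ by a coboundary $d\beta$, with $\beta \in \mathfrak{A}$, so that the cohomologous cocycle $\alpha - d\beta$ has trivial components in $\mathfrak{A}^I$ and $\mathfrak{A}^{II}$. This will suffice, since by the Stokes formula of Theorem~\ref{Stokes}, any coboundary evaluates to $0$ on any closed $1$-chain, so that $\alpha$ and $\alpha - d\beta$ define the same function on loops of knots. The relevant structure of $d$ is that, for an arrow diagram $A$, the $\mathfrak{A}^I$-part of $dA$ is the sum over all isolated arrows $a$ of $A$ of the germ $(A \setminus \{a\}, A)$, while the $\mathfrak{A}^{II}$-part is an analogous sum over all R$_2$-compatible pairs of arrows of $A$.

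Step 1, killing $\alpha^I$. The $1$-cocycle equations coming from meridians of types $\includegraphics[scale=0.6]{R1R1.eps}$, $\includegraphics[scale=0.6]{R1R2.eps}$, $\includegraphics[scale=0.6]{R1R3.eps}$ and $\includegraphics[scale=0.6]{transcusp.eps}$ all share the feature that exactly one R$_1$-variable is present against an otherwise fixed background; together they force the coefficient assigned by $\alpha^I$ to a germ $(A, A \cup \{a\})$ to depend only on the pair $(A,a)$, insensitive to added bystander arrows. This regularity lets one invert $d^I$ on the support of $\alpha^I$ by a suitably chosen $\beta_1 \in \mathfrak{A}$, so that $\alpha - d\beta_1$ has vanishing $\mathfrak{A}^I$-component. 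The $d^{II}, d^\Delta, d^\Lambda$ contributions of $\beta_1$ redistribute into the remaining three components, which we are free to modify.

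Step 2, killing $\alpha^{II}$ without reviving $\alpha^I$. Once $\alpha^I = 0$, the degenerate cusp equation $\includegraphics[scale=0.6]{degencusp.eps}$ (Figure~\ref{pic:meriddegencusp}), whose meridian involves one R-II and one R-I move in prescribed adjacency, becomes a constraint purely on $\alpha^{II}$; combined with the $\includegraphics[scale=0.65]{R2R2.eps}$ and $\includegraphics[scale=0.6]{degtangency.eps}$ equations, it forces $\alpha^{II}$ to lie in the image of $d^{II}$. The decisive issue — and the main obstacle — is to produce a preimage $\beta_2$ whose $d^I$-part is itself a $d^I$-coboundary from a smaller-degree diagram, so that subtracting $d\beta_2$ does not reawaken $\alpha^I$. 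This amounts to proving surjectivity of $d^{II}$ restricted to $\ker d^I$ on the relevant subspace, and it hinges on the fact that the R-I side-contribution of $d$ on the natural candidate diagrams has precisely the shape detected by $\includegraphics[scale=0.6]{degencusp.eps}$, permitting one to iterate Step 1 between corrections. Since everything is grading-preserving (Proposition~\ref{homogeneous}), the iteration terminates degree by degree. Finally, because neither $d^I$ nor $d^{II}$ ever land in $\mathfrak{A}^\Lambda$, the triangle relations $\nab$ play no role in the construction and cannot obstruct it.
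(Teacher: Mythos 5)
Your overall strategy---correct $\alpha$ by coboundaries $d\beta$, $\beta\in\mathfrak{A}$, until the $\mathfrak{A}^I$- and $\mathfrak{A}^{I\!I}$-components vanish, then conclude by Theorem~\ref{Stokes}---is the right one, but the sequential two-step execution creates a circularity that your argument does not actually close. The map $d$ preserves the degree: every germ in $dA$ has $A$ itself as its larger diagram, so a correction $d\beta_2$ designed to kill a degree-$n$ part of $\alpha^{I\!I}$ necessarily re-injects a degree-$n$ term into the $\mathfrak{A}^I$-component whenever the diagrams in $\beta_2$ have isolated arrows. Consequently there is no ``preimage whose $d^I$-part is a $d^I$-coboundary from a smaller-degree diagram'' (the degrees cannot match), and ``the iteration terminates degree by degree'' is not a termination argument, since all successive corrections live in one and the same degree; Proposition~\ref{homogeneous} gives you nothing here. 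As written, Step~2 is exactly the unproved point on which the proposition hinges.

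The missing idea is that the two components must be killed \emph{simultaneously}, with a single constant per arrow diagram. The paper groups together \emph{all} the $1$- and $2$-germs occurring in a fixed $dA$ (these sets partition the $1$- and $2$-germs) and shows that $\alpha$ is constant on each such set: for two germs killing disjoint sets of arrows of $A$ this follows from the transverse-intersection meridians (and the top homogeneous part of the R$_1$/R$_2$ transverse equation already equates the value on a $1$-germ with the value on a $2$-germ of the same $dA$), while for germs sharing a killed arrow it follows from the degenerate cusp and degenerate tangency meridians. This is precisely what forces the constant $c_A$ of your Step~1 and the constant $c'_A$ of your Step~2 to be one and the same number $\alpha_A$, so that subtracting $\sum\alpha_A\,dA$ once annihilates both components with no iteration; without that identification your procedure loops. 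A secondary point: in Step~1 you do not need (and the equations do not obviously give) insensitivity of $\left<\alpha,(A\setminus\{a\},A)\right>$ to \emph{adding} bystander arrows to $A$; what is needed is only independence of the choice of the killed arrow(s) for \emph{fixed} larger diagram $A$, which is what the meridian equations deliver.
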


\begin{proof}
Let $A$ be an arrow diagram. Denote by $\gamma^{1,2}(A)$ the \textit{set} of all $1$- and $2$-germs involved in the sum $dA$. The collection of all such sets $\gamma^{1,2}(A)$ forms a partition of the set of all $1$- and $2$-germs.

Now let $\alpha\in\mathfrak{A}^1$ be an arrow germ formula, defining a non-trivial $1$-cocycle.\\
\textbf{Claim: }for any two arrow germs $a, b$ lying in the same set $\gamma^{1,2}(A)$, one has
$$\left<\alpha, a \right>=\left<\alpha, b \right>.$$
Indeed, assume that $-a$ and $-b$ are far from each other, \ie kill disjoint sets of arrows in $A$. Then the claim follows from $\alpha$ vanishing on the meridians of the strata \includegraphics[scale=0.6]{R1R1.eps}, \includegraphics[scale=0.6]{R1R2.eps} and \includegraphics[scale=0.6]{R2R2.eps}.
If, on the contrary, $-a$ and $-b$ both kill a common arrow in $A$, then it follows from $\alpha$ vanishing on the meridians of the strata \includegraphics[scale=0.6]{degencusp.eps} and \includegraphics[scale=0.6]{degtangency.eps}.

As a consequence, there is a collection of rational numbers $\left\lbrace \alpha_A \right\rbrace $, almost all equal to zero, indexed by the set of arrow diagrams, such that $$\alpha=\sum \alpha_A \,dA \hspace*{0.5cm}+ \hspace*{0.1cm} \text{some arrow $3$-germs and partial arrow $3$-germs}.$$
It follows now from the Stokes formula (Theorem~\ref{Stokes}), that the combination $\alpha-\sum \alpha_A \,dA$ still defines a $1$-cocycle, which is cohomologous to that defined by $\alpha$.
\end{proof}

\subsubsection*{How to ignore some R$_3$ strata as well}
Thanks to Proposition~\ref{prop:R1R2}, we may forget about the R$_1$- and R$_2$-moves involved in all meridians. It makes some equations particularly simple, and enables us to kill yet another family of germs.

\begin{conventiondef}From now on, we will always consider arrow germ formulas that are only made of $3$-germs and partial germs, called \emph{arrow 3-germ formulas}. Also, to remove the indeterminacy due to the triangle relations, we consider only those partial arrow germs that switch an arrowhead and an arrowtail, called \emph{monotonic partial (arrow) germs}. It is easy to see that they form a basis of $\mathfrak{A}^\Lambda    /\nab$.
\end{conventiondef}

\begin{proposition}\label{prop:3}
A germ cannot participate in an arrow $3$-germ formula if it satisfies any of the following conditions. 

\begin{enumerate}
\item A non-distinguished arrow is isolated (\ie in a position to be killed by R$_1$).
\item Two non-distinguished arrows are in a position to be killed by R$_2$. 
\item (Specific to partial germs) -- a distinguished arrow is isolated on one side of the germ.
\item (Specific to $3$-germs) -- a distinguished arrow is isolated on one side of the germ and, if one removes it, the two remaining distinguished arrows are in a position to be killed by R$_2$. 
\end{enumerate}

\end{proposition}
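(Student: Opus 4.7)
The plan is to associate each of the four conditions with a specific codimension-two meridian from Section~\ref{sec:strata} and show that the corresponding $1$-cocycle equation $T(I(m;s))$---combined with the standing vanishing of R$_1$- and R$_2$-germ coefficients from the Convention-Definition preceding the statement---directly forces the coefficient of the offending germ to vanish. The four conditions split naturally in pairs: (1) and (3) concern an arrow being ``R$_1$-able'' and call for meridians containing an R-I move (types \includegraphics[scale=0.6]{R1R3.eps} and \includegraphics[scale=0.6]{transcusp.eps}), while (2) and (4) concern an R$_2$-able configuration and call for meridians containing an R-II move (types \includegraphics[scale=0.6]{R2R3.eps} and \includegraphics[scale=0.6]{cubestratum.eps}).

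I would first treat condition (1). Given a $3$-germ $\gamma=(G_0,G_1)$ with an isolated non-distinguished arrow $a$, choose the meridian $m$ of type \includegraphics[scale=0.6]{R1R3.eps} whose R-I move introduces $a$ and whose R-III move realizes the R$_3$-triangle of $\gamma$. Its four germs consist of two R$_1$-germs (annihilated by any arrow $3$-germ formula $\alpha$) and the two R$_3$-germs $\gamma$ and $\gamma'=\gamma\setminus a$. Expanding $I(m)$ and using the antisymmetry $(y,x)=-(x,y)$ in $\mathfrak{G}_\Delta$, the $\gamma'$-summand coming from the ``bystander-free'' R$_3$-germ is cancelled by a sign-flipped copy produced as the $a$-erasure subgerm of the ``bystander-$a$'' R$_3$-germ. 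The surviving $3$-germ contribution is $\gamma$ itself, together with three partial $3$-germs of the same ``isolated-$a$'' shape. Iterating the same equation (or recognizing these partial $3$-germs as also satisfying condition (1) one rank down) forces $\langle\alpha,\gamma\rangle=0$.

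Condition (2) is handled identically using the meridian \includegraphics[scale=0.6]{R2R3.eps}, with an R$_2$-pair in place of an isolated arrow. For condition (3), I would use the transverse-branch cusp \includegraphics[scale=0.6]{transcusp.eps}: after discarding its R$_1$-germs, the only surviving contribution---modulo partial $3$-germs already killed by (1)---is precisely a partial $3$-germ whose distinguished arrow is isolated on one side, which is therefore forced to have coefficient zero. For condition (4) I would use the cube meridian \includegraphics[scale=0.6]{cubestratum.eps}, whose R$_2$-germs vanish automatically; the remaining equation, after stripping by (1)--(3), isolates exactly the coefficient of a $3$-germ of the shape described in (4).

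The main obstacle will be the precise combinatorial bookkeeping of which subgerms appear in each $I(m;s)$ and with what signs, using the co-orientation $\mathrm{w}(e)\varepsilon(e)$ from Definition~\ref{def_epsilon} and Remark~\ref{rem:coor}, together with the triangle relations modded out in $\mathfrak{A}^\Lambda/\nab$. In particular, the four cases must be proved in the order (1), (2), (3), (4), because each reduction uses the previously-established vanishings to clear away all but the target coefficient; one must also check that the triangle relations do not obstruct this inductive cancellation, which should follow from the monotonicity restriction in the Convention-Definition.
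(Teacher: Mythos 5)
Your handling of conditions (1)--(3) follows the paper's route: points (1) and (2) come from the meridians of \includegraphics[scale=0.6]{R1R3.eps} and \includegraphics[scale=0.6]{R2R3.eps}, whose homogeneous equations collapse to a single surviving term once $1$-germs, $2$-germs and non-monotonic partial germs are discarded, and point (3) comes from the cusp with a transverse branch \includegraphics[scale=0.6]{transcusp.eps}, together with the observation (the one the paper flags as the only non-trivial step) that the R$_1$-able arrow always remains one of the two distinguished arrows in the monotonic partial subgerms of the R$_3$-move.

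Your assignment of condition (4) to the cube meridian \includegraphics[scale=0.6]{cubestratum.eps} is, however, a genuine error. First, the $3$-germs occurring in a cube meridian are not of the shape described in (4): there the R$_2$-able pair consists of the two tangency crossings, which are in cancelling position on \emph{both} sides of each R$_3$-move, and the third distinguished arrow (the crossing with the transverse branch) is not isolated. The configuration of (4) --- an isolated distinguished arrow whose \emph{removal} creates the R$_2$-able pair --- is the kink-with-transverse-branch picture, and it occurs only in the \includegraphics[scale=0.6]{transcusp.eps} meridian. Second, and structurally, the cube equations cannot serve to kill individual variables: even after all reductions they still relate two distinct R$_3$-configurations sharing an edge (Fig.\ref{cubeqs2}), which is precisely why they are retained as genuine equations of the final system in Theorem~\ref{thm:system}; if (4) followed from them, the proposition would partly pre-empt that system rather than reduce its variable set. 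The paper instead extracts \emph{both} points (3) and (4) from the single \includegraphics[scale=0.6]{transcusp.eps} meridian, using the homogeneous splitting of Proposition~\ref{homogeneous}: for each bystander subset $s$, the top-degree homogeneous part of $T(I(m;s))$ is the lone $3$-germ of condition (4), while the part one degree lower yields the monotonic partial germs of condition (3). Your ordering remark is otherwise sound, but the induction must pass through the transcusp equation twice (once per homogeneous degree), not detour through the cube stratum.
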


\begin{proof}
The claims are direct consequences of the fact that a $1$-cocycle vanishes on the meridians of \includegraphics[scale=0.6]{R1R3.eps} (point $1$),  \includegraphics[scale=0.6]{R2R3.eps} (point $2$), and \includegraphics[scale=0.6]{transcusp.eps} (points $3$ and $4$). Indeed, once $1$-germs, $2$-germs and non-monotonic germs have been removed, the homogeneous equations extracted from these meridians (Proposition~
\ref{homogeneous}) contain only one term left, and these one-term equations kill all diagrams of the indicated forms. The only non-trivial observation, for point $3$, is that in a \includegraphics[scale=0.6]{transcusp.eps}-meridian, the arrow born and killed by R$_1$ is always one of the two arrows that are distinguished in the monotonic partial subgerms of the R$_3$-move.
\end{proof}

Altogether, we have proved the following theorem.

\begin{theorem}\label{thm:system}
The linear system with
\begin{itemize}

\item as variables: arrow $3$-germs and monotonic partial arrow $3$-germs that do not satisfy any of the four conditions from Proposition~\ref{prop:3}, and
\item as equations: all homogeneous equations extracted from the meridians of \includegraphics[scale=0.6]{R3R3.eps}, \includegraphics[scale=0.6]{cubestratum.eps} and
\includegraphics[scale=0.6]{quadruple.eps} by Proposition~\ref{homogeneous}, 

\end{itemize}
is a complete system for the $1$-cohomology classes of knots given by arrow germ formulas.
\end{theorem}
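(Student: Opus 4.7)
The theorem is essentially a consolidation of the reduction results proved earlier in the section, and my plan is to verify both directions of completeness. Concretely, I need to show (a) that every $1$-cohomology class representable by an arrow germ formula admits a representative supported on the stated variables, and (b) that any such representative satisfying the stated equations actually defines a $1$-cocycle.

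For direction (a), I would start with a general arrow germ formula $\alpha\in\mathfrak{A}^1$ whose associated $1$-cochain is closed. Proposition~\ref{prop:R1R2} lets me replace $\alpha$ by a cohomologous representative supported on arrow $3$-germs and partial arrow $3$-germs. The convention-definition stating that monotonic partial germs form a basis of $\mathfrak{A}^\Lambda/\nab$ lets me further assume all partial-germ terms in $\alpha$ are monotonic. Finally, Proposition~\ref{prop:3} shows that the coefficients attached to any of the four forbidden types of germs must vanish, since the relevant homogeneous equations are one-term.

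For direction (b), I take $\alpha$ in the span of the allowed variables, assume it satisfies every homogeneous equation of types \includegraphics[scale=0.6]{R3R3.eps}, \includegraphics[scale=0.6]{cubestratum.eps} and \includegraphics[scale=0.6]{quadruple.eps}, and check that $\langle\alpha, \TI(m)\rangle=0$ for every meridian $m$ of any type. By Proposition~\ref{homogeneous}, it suffices to check each homogeneous component $T(I(m;s))$ individually. I would case-split by stratum type: for \includegraphics[scale=0.6]{R1R1.eps}, \includegraphics[scale=0.6]{R1R2.eps}, \includegraphics[scale=0.65]{R2R2.eps}, \includegraphics[scale=0.6]{degencusp.eps} and \includegraphics[scale=0.6]{degtangency.eps}, every term of $I(m;s)$ is a $1$- or $2$-germ, hence paired trivially with our $\alpha$; for \includegraphics[scale=0.6]{R1R3.eps}, \includegraphics[scale=0.6]{R2R3.eps}, and \includegraphics[scale=0.6]{transcusp.eps}, the homogeneous decomposition produced by the proof of Proposition~\ref{homogeneous} splits each equation into a lower-degree piece consisting of $1$- or $2$-germ terms (again trivially killed), and a higher-degree piece which, as observed in the proof of Proposition~\ref{prop:3}, reduces to a single term of one of the four forbidden types (hence absent from $\alpha$ by hypothesis); and for \includegraphics[scale=0.6]{R3R3.eps}, \includegraphics[scale=0.6]{cubestratum.eps}, and \includegraphics[scale=0.6]{quadruple.eps} the vanishing holds by assumption.

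The work is essentially bookkeeping, and the main (mild) obstacle is the careful verification in the \includegraphics[scale=0.6]{R1R3.eps}, \includegraphics[scale=0.6]{R2R3.eps}, \includegraphics[scale=0.6]{transcusp.eps} cases that, once $1$-germ, $2$-germ and non-monotonic-partial-germ variables have been suppressed, the surviving higher-degree homogeneous components of $T(I(m;s))$ really do boil down to the four one-term relations encoded by Proposition~\ref{prop:3} — this uses the fact that the relevant subgerms keep exactly two arrows from the $\mathrm{R}_3$-triple, combined with the observation (made in the proof of Proposition~\ref{prop:3}, point~$3$) that the arrow born and killed by $\mathrm{R}_1$ in a \includegraphics[scale=0.6]{transcusp.eps}-meridian is always distinguished in the monotonic partial subgerms produced by the neighbouring $\mathrm{R}_3$ move. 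Once this case analysis is in hand, the theorem follows by combining it with Proposition~\ref{prop:R1R2}, the convention on monotonic partial germs, and Proposition~\ref{prop:3}.
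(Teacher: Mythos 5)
Your proposal is correct and follows essentially the same route as the paper, which offers no separate argument beyond the sentence \enquote{Altogether, we have proved the following theorem}: the intended proof is precisely the accumulation of Proposition~\ref{prop:R1R2}, the convention restricting to monotonic partial germs, Proposition~\ref{prop:3}, and Proposition~\ref{homogeneous}. Your write-up merely makes explicit the bookkeeping the paper leaves implicit (in particular the converse direction, that the discarded meridian types contribute only equations automatically satisfied once the forbidden variables are suppressed), and that verification is carried out correctly.
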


\subsection{Some remarks and questions}

\subsubsection*{$1$-cocycles of virtual knots}
Because it is based on a Gauss diagram construction, our method produces by nature $1$-cocycles in the space of \textit{virtual} knots -- and hence, of course, of actual knots as well. Indeed, they are solutions to all Gauss diagrammatic equations associated with the meridians of $2$-strata in the space of knots, regardless of whether the  diagrams in those meridians actually correspond to knot projections. 

\newpage

In fact, arrow germ formulas for $1$-cocycles of actual knots should only satisfy a subset of our equations, but it is very hard to determine that subset. An analog of this phenomenon can be observed for arrow diagram formulas: Polyak-Viro's formula for $v_3$ \citep{PolyakViro} is not an invariant of virtual knots, and as a result, the proof that it is actually an invariant for actual knots requires some special knowledge of arrow diagram identities -- see \citep{Ostlund}.

%
%
%
%

\subsubsection*{ Arrow germ presentations of the trivial $1$-cocycle}

By the Stokes formula (Theorem~\ref{Stokes}), all \enquote{derivatives} of linear combinations of arrow diagrams are trivial $1$-cocycles.

\begin{question}
Is the converse true? Namely, is it true that any arrow germ formula defining a trivial $1$-cocycle is actually the derivative of a linear combination of arrow diagrams? 
\end{question}

Although this would seem quite natural, it is not obvious -- and in fact we have no clue whether it is true or not. 

However, if it is not, then it would still be interesting to understand what are actually the \enquote{primitives} of those trivial cocycles. Can some combinations of them lead to new knot invariants?

\subsubsection*{Finite-type cohomology classes}

Recall that the degree of an arrow germ $(A^0, A^1)$ is defined by $\operatorname{max} (\operatorname{deg} (A^0), \operatorname{deg} (A^1))$. We end this section with the most important question of this article.

\begin{conjecture}\label{conjecture}
Every $1$-cocycle presented by an arrow germ formula of degree $n$ is of finite-type, of degree at most $n$. 
\end{conjecture}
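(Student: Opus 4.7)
The plan is to adapt to $1$-cocycles the Vassiliev-skein argument by which Polyak--Viro formulas of degree $n$ produce invariants of degree $\leq n$. As a working definition I would take ``finite-type of degree $\leq n$'' to mean vanishing of the alternating extension $\alpha_{(n+1)}$ on loops in the space of long knots with $n+1$ transverse double points held fixed along the loop, namely
$$\alpha_{(n+1)}(\gamma_\star)\;=\;\sum_{\varepsilon\in\{\pm 1\}^{n+1}}\Bigl(\prod_i\varepsilon_i\Bigr)\,\alpha(\gamma_\varepsilon),$$
where $\gamma_\varepsilon$ denotes the smooth-knot loop obtained by resolving each double point according to $\varepsilon$. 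For $0$-cocycles this recovers the classical Vassiliev filtration, and at the $H^1$ level a parallel characterization should hold, phrased in terms of the codimension-$2$ strata described in Section~\ref{sec:strata}.

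The proof then reduces to a local statement at each Reidemeister move of $\gamma_\star$. After a generic perturbation, the $n+1$ double points persist throughout the loop and never coincide with a Reidemeister move; hence in every Gauss diagram appearing along any $\gamma_\varepsilon$, the $n+1$ designated crossings appear as $n+1$ prescribed arrows carrying signs exactly $\varepsilon_1,\ldots,\varepsilon_{n+1}$, while the remaining combinatorics is independent of $\varepsilon$. By the Stokes formula (Theorem~\ref{Stokes}),
$$\alpha_{(n+1)}(\gamma_\star)\;=\;\sum_{R}\sum_{\varepsilon}\Bigl(\prod_i\varepsilon_i\Bigr)\,\<\alpha,(G_0^\varepsilon,G_1^\varepsilon)\>,$$
where $R$ ranges over the Reidemeister moves of $\gamma_\star$, and it suffices to show that each $R$-summand vanishes independently.

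The combinatorial heart of the argument is then immediate from the degree hypothesis. By definition, $\<\alpha,(G_0,G_1)\>=\left<S(\alpha),I(G_0,G_1)\right>$ is a signed sum indexed by subgerms of $(G_0,G_1)$, and each such subgerm carries at most $n=\operatorname{deg}(\alpha)$ arrows. Since there are $n+1$ designated arrows, every subgerm must omit at least one of them, say the $i$-th. For that subgerm the contribution factors as (something independent of $\varepsilon_i$) times $\varepsilon_i$, so the sum over $\varepsilon_i=\pm 1$ cancels. This is the verbatim analogue of the Polyak--Viro cancellation at the invariant level.

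The main obstacle is not the skein cancellation above, which is routine, but the identification of this combinatorial notion with Vassiliev's filtration on $H^1$ of the knot space. For invariants the equivalence is classical; for $1$-cocycles it requires a genuine analysis of how the codimension-$2$ strata of Section~\ref{sec:strata} contribute to the resolution of the discriminant and to the associated spectral sequence --- in particular, one must check that the alternating sum over resolutions of $n+1$ double points lifts coherently to a cycle at the appropriate level of the resolved discriminant filtration. I expect this identification to hinge on the same kind of careful double-point tracking used in \citep{Vassiliev, Turchin} to analyze the Teiblum--Turchin cocycle, but in the generality of an arbitrary arrow germ formula it is precisely this step that prevents the conjecture from being a proposition.
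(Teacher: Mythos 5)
This statement is not proved in the paper: it is stated and labelled as a conjecture, and the author offers only partial evidence in degree $3$ (Theorem~\ref{thm:mod2}, which establishes finite-typeness of $\alpha_3^1$ only after reduction mod $2$, by an explicit, ad hoc construction of the Alexander-dual cycle in Vassiliev's resolved discriminant with differential-geometric conditions on the cells of Figs.~\ref{pic:ft3}--\ref{pic:ft1}). So no blind attempt can be ``the same as the paper's proof''; the honest question is whether your argument closes the gap the paper leaves open. It does not, and to your credit you say so in your last paragraph. The combinatorial half of your argument is sound and is indeed the verbatim Polyak--Viro cancellation: every subgerm seen by a degree-$n$ formula carries at most $n$ arrows, hence omits at least one of the $n+1$ designated crossings, and the alternating sum over that crossing's resolutions kills its contribution. (One small inaccuracy: you cannot perturb the loop so that the designated crossings ``never coincide with a Reidemeister move'' --- a third branch can slide over a double point, so a designated arrow can sit inside an R$_3$ triangle; this does not harm the cancellation, since the counting argument above is insensitive to whether the omitted designated arrow is distinguished in the germ or not, but the claim as stated is false.)

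The genuine gap is the one you name: you have replaced ``finite type of degree at most $n$'' by a skein-style working definition (vanishing of the alternating extension on loops of knots with $n+1$ fixed double points), whereas the paper uses Vassiliev's definition --- the Alexander-dual $(\omega-2)$-cycle in the one-point compactification of $\Sigma$ must lift to the $n$-th stage of the filtration of the resolved discriminant. For invariants these two notions are classically equivalent; for classes in $H^1$ of the knot space no such equivalence is available, and establishing it (or directly producing the lift through $\sigma_1\subset\sigma_2\subset\cdots\subset\sigma_n$, which is what Section~\ref{sec:FT} does by hand for $n=3$ over $\ZZ_2$, including the differential-geometric strata of Fig.~\ref{pic:geom}) is precisely the content of the conjecture. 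Your proposal therefore reduces the conjecture to an unproved statement of at least comparable difficulty rather than proving it; as a heuristic it is consistent with the paper's point of view (cf.\ the remark after Conjecture~\ref{conjecture} on local writhes as co-orientations of the singular strata), but it should not be presented as a proof, even modulo ``routine'' identifications.
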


Section~\ref{sec:TT} will provide a number of clues in favor of this conjecture. In general, here is already one reason to believe that arrow germs and Vassiliev's theory should be related: although they are most of the time thought of via planar projections, one should not forget that \textit{local writhes} are actually well-defined signs associated with the two desingularizations of a double point, for singular knots in any oriented $3$-manifold (see \citep{FiedlerSSS}, Lemma~$1$). In other words, (products of) writhe signs are natural co-orientations of Vassiliev's singular strata: it makes it possible to understand what is the oriented boundary of an arrow germ formula in the first stage of Vassiliev's filtration (see Section~\ref{sec:FT}).

\section{Teiblum-Turchin's cocycle}\label{sec:TT}

The first -- and, so far, the only -- example of a finite-type integral $1$-cocycle of knots is the so-called Teiblum-Turchin cocycle $v_3^1$. No formula is known to actually evaluate it, except mod $2$ \citep{Vassiliev, Turchin}. In this section, we compute completely the linear system described in Theorem~\ref{thm:system} in homogeneous degree three. Then we study one of the solutions, called $\alpha_3^1$, which we conjecture is equal to the Teiblum-Turchin cocycle. Two properties are proved to support this conjecture: first, the evaluation of $\alpha_3^1$ on the rotation of a long knot $K$ around its axis gives the Casson invariant $v_2(K)$, which is conjectured to hold for $v_3^1$ in \cite{Turchin}; second, we show that $\alpha_3^1 \operatorname{mod} 2$ is of finite type, which implies the equality $\alpha_3^1=v_3^1$ over $\ZZ_2$.

\subsection{Arrow germ formulas of degree three}

Note that the lowest degree of a germ involved in the meridian of a \includegraphics[scale=0.6]{R3R3.eps}-stratum is $4$, hence we are left with the study of the two strata  \includegraphics[scale=0.6]{cubestratum.eps} and
\includegraphics[scale=0.6]{quadruple.eps} -- \ie the cube and tetrahedron equations.

\subsubsection*{The cube equations}

\begin{figure}[h!]
\centering 
\psfig{file=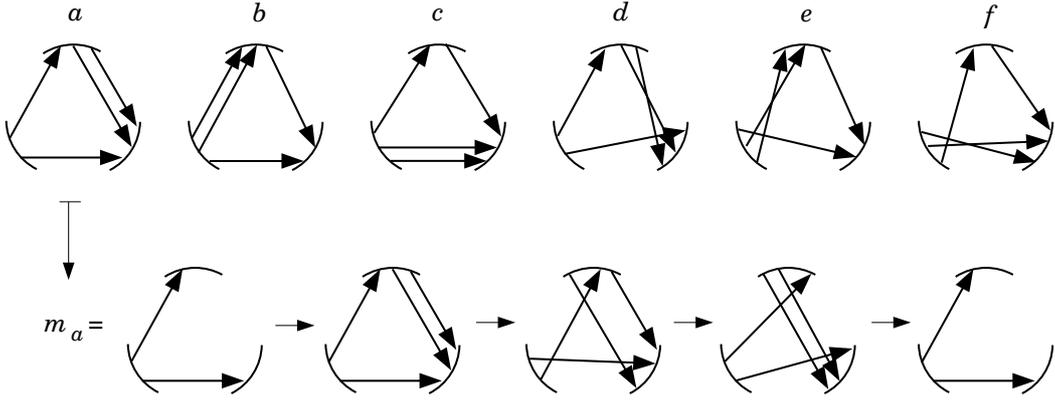,scale=0.8}
\caption{Six diagrams representing the essentially different cube meridians, with one example}\label{cubeqs} 
\end{figure}

There are $6\times 2^3$ meridians of type \includegraphics[scale=0.6]{cubestratum.eps} from the viewpoint of Gauss diagrams. The factor $6$ corresponds to the six diagrams on Fig.\ref{cubeqs} (the lower row of the picture shows as an example the meridian associated with the first diagram), and each factor $2$ is a binary choice, that is actually not essential:
\begin{enumerate}
\item For each picture, there are exactly two choices of signs that make the R$_2$- and R$_3$-moves possible, and this choice has no influence in the resulting equation.
\item For each picture, say \includegraphics[scale=0.6]{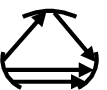}, one can switch the positions of the two arrow ends that are alone -- in the example, \includegraphics[scale=0.6]{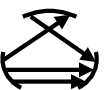}. Again the associated equations are identical.
\item Reversing all the arrows in the pictures yields the same equations, with all arrows reversed. In particular, note that the set of variables that we have not banned from the system (Theorem~\ref{thm:system}) is stable under the arrow-reversing operation.
\end{enumerate}
The equations of degree $3$ derived from the six meridians on Fig.\ref{cubeqs} are respectively given in Fig.\ref{cubeqs2} -- all of them are homogeneous parts of degree $3$ of  $T(I (m;s))$ with $s=\emptyset$ (Definition~\ref{def:bystanders}).
Note that substitutions can be made so that equations $d)$ and $e)$ contain both only (four) partial germs. Also, note that $c)$ contains three different equations, indexed by the choice of a point at infinity.
\begin{figure}[h!]
\centering 
\psfig{file=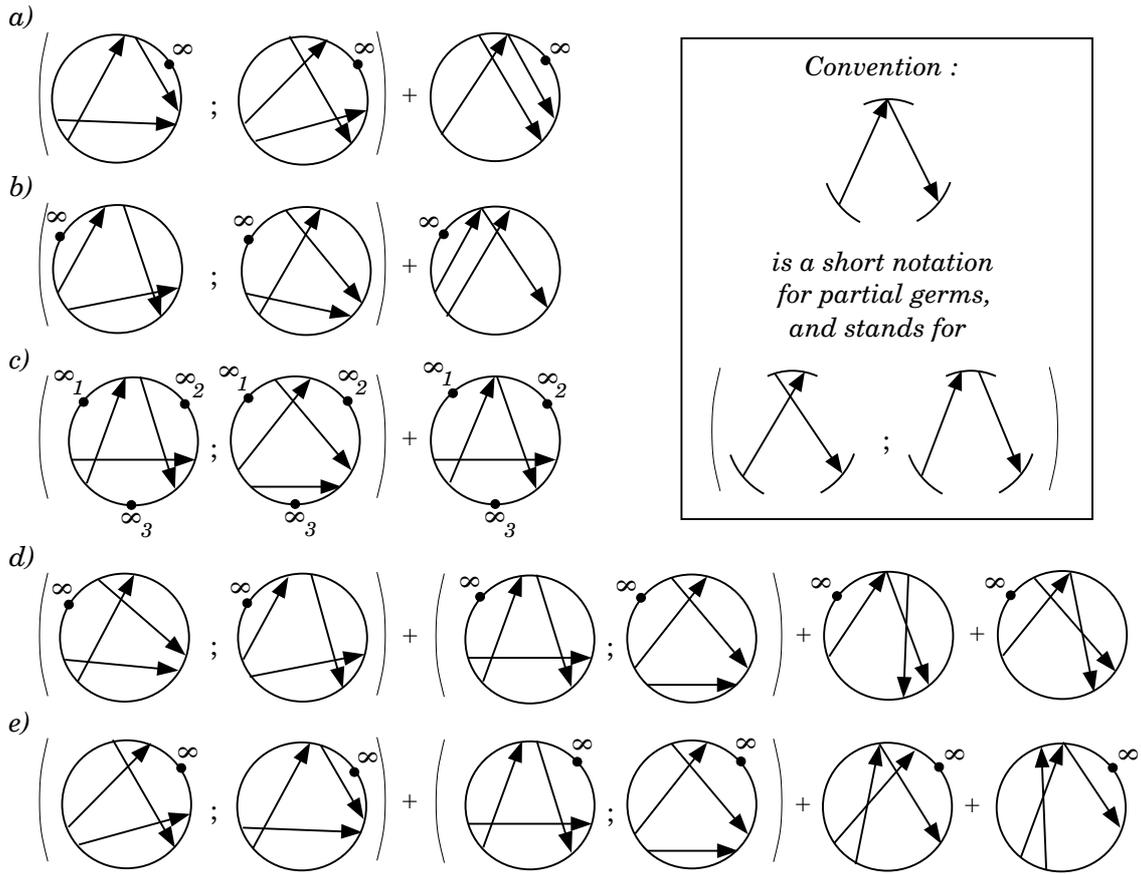,scale=0.76}
\caption{Half of the degree $3$ cube equations -- for the other half, reverse all the arrows}\label{cubeqs2} 
\end{figure}

\begin{remark}
Our notational convention for partial arrow germs (Fig.\ref{cubeqs2}) is not anecdotic. It corresponds to the co-orientation of R$_3$-strata given by the sign $\mathrm{w}(e)\varepsilon(e)$ (see Remark~\ref{rem:coor}).
Arrow germ formulas of degree $n$ simply count the intersection index (with R$_3$ strata endowed with this coorientation), weighted by the sign of $n-2$ arrows.
\end{remark}

\subsubsection*{The tetrahedron equations}

A Gauss diagrammatic description of all meridians coming from the positive braid-like quadruple point has been written in \citep{FiedlerQuantum1cocycles}
\textit{(pp. 35--46)}. One readily sees that all degree $3$ equations coming from those meridians are simple linear combinations of the cube equations (Fig.\ref{cubeqs2}), except for the one corresponding to global type II, with point at infinity $2$, and global type IV, with point at infinity $3$. (\citep{FiedlerQuantum1cocycles},
\textit{pp. 37--38 and 41--42}), which are obtained from each other by reversing the orientation of all the arrows. They are shown in Fig.\ref{tetraeqs}, using the same notational convention as in Fig.\ref{cubeqs2}.

\begin{figure}[h!]
\centering 
\psfig{file=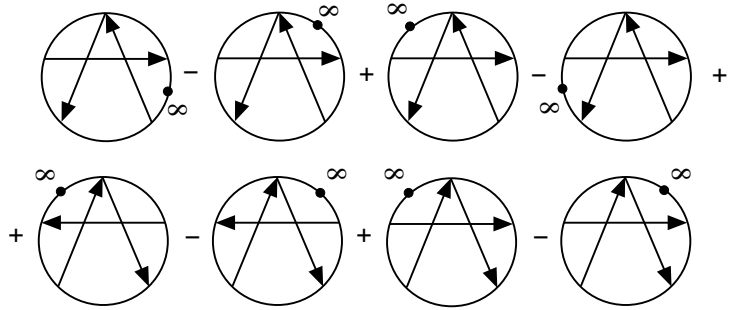,scale=0.76}
\caption{One degree $3$ tetrahedron equation -- for the other one, reverse all the arrows}\label{tetraeqs} 
\end{figure}

It is proved in \citep{Vassiliev} that the group of finite-type integral $1$-cocycles of degree $3$ is isomorphic to $\ZZ$. Hence, if Conjecture~\ref{conjecture} is true, one cannot expect more than essentially one non-trivial solution to the system of equations on Figs.\ref{cubeqs2} and \ref{tetraeqs}. Therefore, we shall only extract and study one such solution.

\begin{figure}[h!]
\centering 
\psfig{file=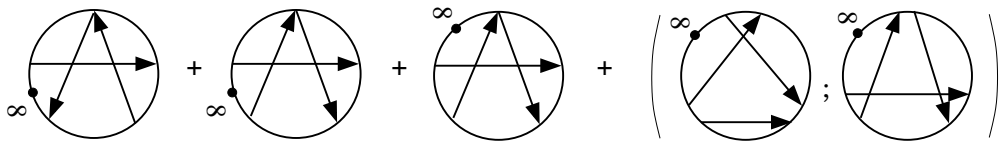,scale=0.76}
\caption{The first non-trivial arrow germ formula $\alpha_3^1$}\label{pic:a31} 
\end{figure}

\begin{theorem}
The arrow germ formula $\alpha_3^1$ on Fig.\ref{pic:a31} defines a $1$-cocycle of knots.
\end{theorem}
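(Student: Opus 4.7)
The plan is to appeal to Theorem~\ref{thm:system}: since $\alpha_3^1$ is already written in terms of arrow $3$-germs and monotonic partial arrow $3$-germs that avoid the four forbidden configurations from Proposition~\ref{prop:3}, it suffices to check that $\alpha_3^1$ is orthogonal, with respect to $\<\,,\,\>$, to every homogeneous equation of degree $3$ extracted from the meridians of type \includegraphics[scale=0.6]{R3R3.eps}, \includegraphics[scale=0.6]{cubestratum.eps} and \includegraphics[scale=0.6]{quadruple.eps}. As noted just before the statement of the cube equations, the minimal degree appearing in an \includegraphics[scale=0.6]{R3R3.eps}-meridian is $4$, so these contribute nothing in degree $3$ and can be ignored. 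Hence only the cube and tetrahedron equations of Figures~\ref{cubeqs2} and~\ref{tetraeqs} need to be tested.

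First I would organize the cube equations. Each of the six diagrams $a)$--$f)$ of Fig.~\ref{cubeqs2} gives one equation (with $c)$ giving three, indexed by the point at infinity), and the arrow-reversed versions give the remaining half. For each equation I would list the $3$-germs and monotonic partial $3$-germs it involves, match each with its coefficient in $\alpha_3^1$, and sum. Most terms are zero by inspection because the combinatorial type of the germ does not occur in $\alpha_3^1$; the nontrivial verifications reduce to checking a small number of cancellations among pairs of germs that do appear. The substitutions mentioned in the paper after Fig.~\ref{cubeqs2} (making equations $d)$ and $e)$ involve only partial germs) are convenient because they restrict the comparison to the partial-germ part of $\alpha_3^1$.

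Next I would verify the tetrahedron equation of Fig.~\ref{tetraeqs} in the same fashion. This is the longest step: the tetrahedron meridian of Fig.~\ref{pic:meridquad} unfolds into many germs, and each one must be matched combinatorially to a term of $\alpha_3^1$. Two symmetries cut the work in half: (i) by the $\mathbb{Z}_2$-symmetry of arrow reversal, checking one of the two tetrahedron equations automatically yields the other; (ii) by the symmetries of the cube of R$_3$ situations (Fig.~\ref{cube}), germs paired through the good coorientation $\mathrm{w}(e)\varepsilon(e)$ contribute with predictable signs, so the cancellations in the tetrahedron equation can be traced to the cube equations already verified.

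The hard part will be precisely this tetrahedron verification: the equation is long, each term is a germ of degree $3$, and the sign coming from the coorientation $\mathrm{w}(e)\varepsilon(e)$ (Remark~\ref{rem:coor}) has to be tracked consistently at every occurrence. Once that bookkeeping is under control, everything else is a finite table-check. Since all equations of degree $3$ are accounted for, orthogonality to all of them implies by Theorem~\ref{thm:system} that $\alpha_3^1$ defines a $1$-cohomology class of knots.
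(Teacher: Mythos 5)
Your proposal is correct and follows essentially the same route as the paper: the paper's proof is precisely a direct verification that $\alpha_3^1$ satisfies the equations of Figs.~\ref{cubeqs2} and \ref{tetraeqs} (and their arrow-reversed versions), having already observed that the degree-$3$ setting excludes all other meridian types. Your additional bookkeeping remarks about symmetries and homogeneity are consistent with the paper's reductions and do not constitute a different argument.
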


\begin{proof}
One readily checks that it satisfies all equations from Figs.\ref{cubeqs2} and \ref{tetraeqs} as well as the ones obtained from them by reversing the orientations of the arrows. Since the arrow germs involved in $\alpha_3^1$ do not take part in any other meridian than those of types \includegraphics[scale=0.6]{cubestratum.eps} and
\includegraphics[scale=0.6]{quadruple.eps}, the theorem follows.
\end{proof}

\newpage

\subsection{
$\alpha_3^1(\operatorname{rot}(K))=v_2(K)$}

For any long knot $K$, there is a loop $\operatorname{rot}(K)$ canonically defined by rotating $K$ positively around its axis. We repeat in Fig.\ref{pic:rotK1} the diagrammatic description of $\operatorname{rot}(K)$ given in \citep{FiedlerQuantum1cocycles}.
To convince oneself that this loop amounts to one rotation around the axis, it can be helpful to think of the two isotopic bands (or framed long knots) at the top-left of Fig.\ref{pic:rotK1}.
\begin{figure}[h!]
\centering 
\psfig{file=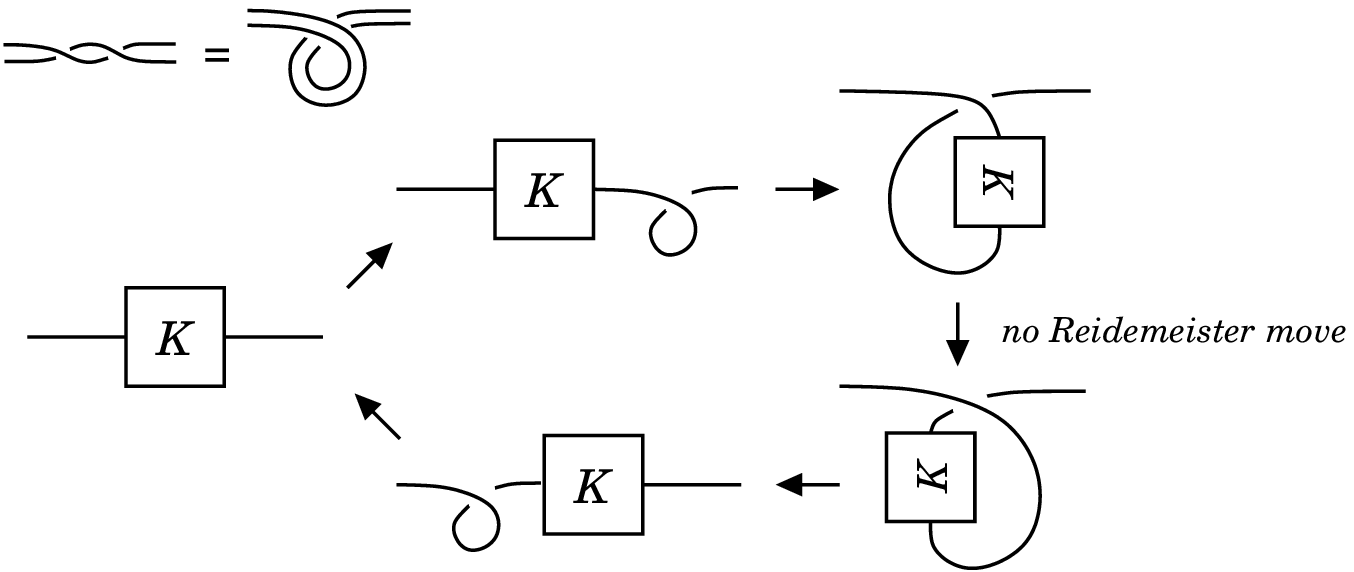,scale=1}
\caption{The loop $\operatorname{rot}(K)$}\label{pic:rotK1} 
\end{figure}

On the other hand, $v_2(K)$ denotes the Casson invariant of $K$, which is the only non-trivial Vassiliev invariant of degree $2$; it is given by the following arrow diagram formula (\citep{PolyakViro}, \textit{Theorem~1 and remark on p.451}):
$$v_2(K)=\left<\raisebox{-1ex}{\includegraphics[scale=1]{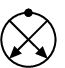}}\,, K\right>.$$

\begin{theorem}
For all knots $K$, $$\alpha_3^1(\operatorname{rot}(K))=-v_2(K).$$
\end{theorem}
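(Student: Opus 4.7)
The plan is to directly compute $\<\alpha_3^1,\operatorname{rot}(K)\>$ by decomposing the loop $\operatorname{rot}(K)$ into its constituent Reidemeister moves following Fig.~\ref{pic:rotK1} and summing the contribution of each. Since $\alpha_3^1$ is an arrow $3$-germ formula (containing only $3$-germs and monotonic partial $3$-germs), only R-III moves and R-I moves adjacent to an R-III move can contribute; all plain R-I and R-II events along the loop are invisible to it, so the sum is finite and combinatorially structured.

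The first main step is to enumerate those relevant events along $\operatorname{rot}(K)$. From the diagrammatic description in Fig.~\ref{pic:rotK1}, the rotation is organized so that a local ``moving frame'' is carried once around the knot, and the R-III events arise precisely when this frame is dragged past one of the crossings of the original diagram of $K$. Each such passage produces a short, controlled sequence of R-III moves whose three distinguished arrows consist of two arrows coming from $K$ together with one transient arrow produced by the rotation (with a few R-I events adjacent to them, feeding the partial germs of $\alpha_3^1$).

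The second main step is to evaluate $\alpha_3^1$ on each such event. For every R-III (or adjacent R-I) event I would read off the local Gauss-diagrammatic configuration of the three distinguished arrows together with one extra non-distinguished arrow of $K$ (so that the germ has degree $3$), and look up which terms of Fig.~\ref{pic:a31} match. Because the orientation and relative position of the transient arrow are determined purely by the rotation, independent of $K$, the accumulated contribution should organize into
\[
\<\alpha_3^1,\operatorname{rot}(K)\>\;=\;\sum_{(a,b)}c(a,b)\,\operatorname{sign}(a)\operatorname{sign}(b),
\]
where $(a,b)$ ranges over pairs of arrows in the Gauss diagram of $K$ and $c(a,b)\in\ZZ$ depends only on their mutual arrangement. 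The claim is that $c(a,b)=-1$ for linked (interleaved) pairs and $0$ otherwise, reproducing $-v_2(K)$ by the Polyak-Viro formula.

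The main obstacle is the combinatorial bookkeeping in this second step: there are many mutual positions of $(a,b)$ relative to the transient arrow as the frame sweeps past, and one has to check which of the six cube-equation germ types of $\alpha_3^1$ (together with their arrow-reversed counterparts and partial-germ contributions) are triggered in each case, and that the signs coming from the $\mathrm{w}(e)\varepsilon(e)$ co-orientation combine correctly. A delicate additional point is to confirm that R-III events whose three distinguished arrows all come from $K$ (with no transient arrow) either do not occur in a convenient representative of $\operatorname{rot}(K)$ or contribute zero in total; otherwise the sum would have a genuinely degree-$3$ component in the arrows of $K$, contradicting the degree of $v_2$. The cocycle property of $\alpha_3^1$ allows freedom in choosing the representative of the homotopy class of $\operatorname{rot}(K)$, which I expect to make this last verification tractable by reducing the check to a finite list of local models.
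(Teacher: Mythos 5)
Your overall strategy coincides with the paper's: decompose $\operatorname{rot}(K)$ into its Reidemeister events, observe that only R-III events can be detected by $\alpha_3^1$, and reduce the evaluation to a signed count over pairs of arrows of the diagram of $K$ matching the Polyak--Viro formula for $v_2$. However, the proposal stops exactly where the proof begins. The statement that $c(a,b)=-1$ for linked pairs and $0$ otherwise \emph{is} the theorem; presenting it as ``the claim'' and deferring the verification as ``combinatorial bookkeeping'' leaves the entire content unproved. The paper's argument consists of three concrete steps you would still need: (i) in the over-sliding half of the loop, every R$_3$-triple has two of its arrows in a configuration incompatible with all four germs of Fig.~\ref{pic:a31}, so that half contributes nothing; (ii) in the under-sliding half there is exactly one R$_3$-event per crossing $p$ of $K$, and only the first (partial) germ of $\alpha_3^1$ can fire, precisely when a bystander arrow $q$ of the original diagram is linked with $p$ in a prescribed way --- this is what produces the sum over linked pairs; (iii) a preliminary diagram isotopy making both branches at every crossing point ``to the right'' reduces the sign analysis to two local models, in each of which $w(r)\varepsilon=-1$, where $r$ is a transient distinguished arrow and $\varepsilon$ is the co-orientation sign. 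Without (iii) or a substitute normalization, your case analysis over ``many mutual positions'' has no effective bound, and without (ii) there is no argument that the coefficient is $-1$ rather than $+1$ or $0$.

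Two factual slips would also derail the bookkeeping as you set it up. First, each R-III event in $\operatorname{rot}(K)$ involves \emph{one} arrow of the original diagram (the crossing being passed) and \emph{two} transient arrows created by the sliding branch --- not two arrows of $K$ and one transient arrow as you state; since the germs of $\alpha_3^1$ are sensitive to exactly which arrows are distinguished, this reversal changes which terms of Fig.~\ref{pic:a31} can match. Second, R-I moves never contribute to an arrow $3$-germ formula: the partial germs of $\alpha_3^1$ pair with partial \emph{subgerms} of R$_3$-germs, not with $1$-germs, so ``R-I moves adjacent to an R-III move'' cannot feed the partial germs. Finally, your worry about R-III events whose three distinguished arrows all come from $K$ is moot for the standard representative of $\operatorname{rot}(K)$: the diagram of $K$ itself never moves during the loop, so every R-III event involves the sliding arc and hence at least two transient arrows.
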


In \citep{Turchin}, V.Turchin conjectures that the Teiblum-Turchin cocycle $v_3^1$ satisfies the above equation (with the opposite sign). His conjecture would follow from our Conjecture~\ref{conjecture} and this theorem.

\begin{proof}
In all this proof the dot on each diagram stands for the point at infinity.
R$_3$-moves only happen in the two horizontal arrows on Fig.\ref{pic:rotK1}. For the top arrow, it is easy to see that all of them have two arrows from the R$_3$-triple in position \raisebox{-1.2ex} {\includegraphics[scale=1]{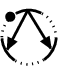}}, hence they cannot be seen by $\alpha_3^1$.

In the bottom horizontal arrow on Fig.\ref{pic:rotK1}, there is exactly one R$_3$-move for each crossing in the initial diagram of $K$. Moreover, the R$_3$-triple is in position 
\begin{enumerate}
\item \raisebox{-1ex}{\includegraphics[scale=1.2]{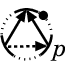}} for each arrow $p$ in position  \raisebox{-1ex}{\includegraphics[scale=1.2] {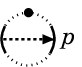}} in the initial diagram, and

\item \raisebox{-1ex}{\includegraphics[scale=1.2]{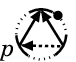}} for each arrow $p$ in position \raisebox{-1ex}{\includegraphics[scale=1.2]{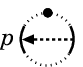}}.
\end{enumerate}
 
The formula $\alpha_3^1$ is oblivious of R$_3$-moves in situation $2$. As for situation $1$, the last three germs of Fig.\ref{pic:a31} do not participate, and the first germ counts the number of arrows $q$ in position \raisebox{-1.3ex}{\includegraphics[scale=1.2]{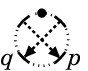}} with respect to $p$ and the point at infinity. Obviously arrows $q$ in such a position can only come from the initial diagram (all other arrows, that are due to the arc slide over $K$, have their head \enquote{immediately after} the point at infinity). Hence the theorem is proved modulo $2$. We have to understand the signs.

For each couple $p$, $q$ as above, corresponding to the situation \raisebox{-2ex}{\includegraphics[scale=1.2]{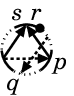}},  $\alpha_3^1$ counts the sign $$w(p)\times w(q)\times w(r)\times \varepsilon,$$
where $w(\cdot)$ is the sign of an arrow, and $\varepsilon$ is equal to $+1$ if the arrows $p$ and $r$ cross each other before the move, $-1$ otherwise (by the convention on Fig.\ref{cubeqs2}).
Since $v_2(K)$ counts $w(p)\times w(q)$ for all such couples, the claim is that one always has $w(r)\times \varepsilon= -1$.

By a diagram isotopy, one can assume that from the beginning, every crossing in the diagram of $K$ has its two branches oriented \enquote{to the right} as in Fig.\ref{pic:rotK9} (this only serves to reduce the number of situations to consider). Then there are only two possible situations, depending on $w(p)$; they are depicted on Fig.\ref{pic:rotK9}, which shows the diagrams right \textit{before} the R$_3$-move is performed. In each of these situations, one has $w(r)=+1$ and $\varepsilon=-1$, hence $w(r)\times \varepsilon= -1$ as announced.
\begin{figure}[h!]
\centering 
\psfig{file=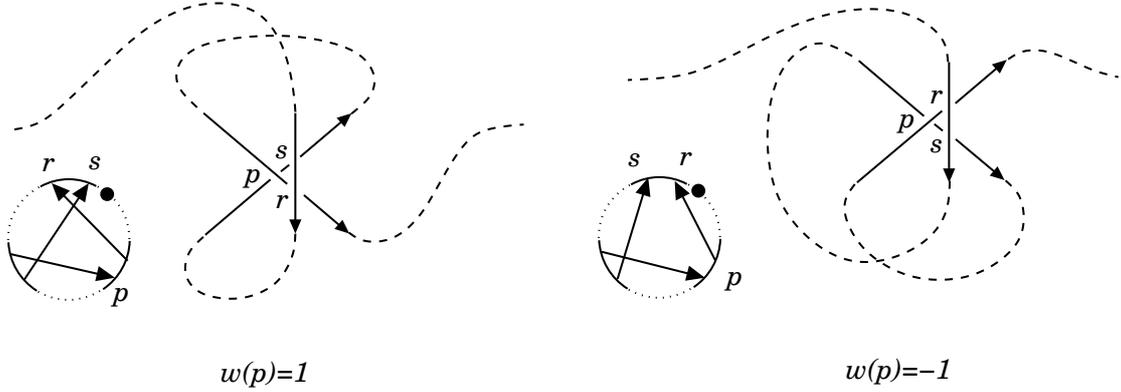,scale=0.8}
\caption{Two possible situations if the branches are directed to the right near every crossing }\label{pic:rotK9} 
\end{figure}

\end{proof}

\subsection{The reduction of 
$\a$ mod $2$ is a finite-type $1$-cocycle}\label{sec:FT}

This section is devoted to prove that $\alpha_3^1$ is a potential formula for the Teiblum-Turchin cocycle $v_3^1$ over the integers.
\begin{theorem}\label{thm:mod2}
The reduction of $\alpha_3^1$ mod $2$ is a finite-type $1$-cocycle of degree $3$.
\end{theorem}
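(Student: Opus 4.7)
The approach is to verify directly the defining property of a finite-type 1-cocycle of degree $\le 3$: that the 4-th iterated Vassiliev derivative vanishes. For a 1-cocycle $c$ and a 1-parameter family $\gamma$ of knots with $k$ transverse double points at fixed crossings $a_1,\dots,a_k$, this derivative is defined by the alternating sum $c^{(k)}(\gamma) = \sum_{\sigma\in\{\pm 1\}^k} \sigma_1\cdots\sigma_k\cdot c(\gamma_\sigma)$ over the $2^k$ sign resolutions, where $\gamma_\sigma$ denotes the loop obtained by assigning sign $\sigma_i$ to the crossing $a_i$. The goal is to show $\alpha_3^{1\,(4)}(\gamma) \equiv 0 \pmod 2$ for every such $\gamma$.

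The central observation is that every germ in $\alpha_3^1$ involves exactly 3 arrows -- either a full 3-germ with the R$_3$-triple as its three arrows, or a monotonic partial 3-germ with two R$_3$-arrows plus one weight arrow. By the remark following Figure~\ref{cubeqs2}, the formula evaluates each R$_3$-move by the product of the coorientation sign $\mathrm{w}(e)\varepsilon(e)$ and the sign of one additional arrow. Hence the contribution of a fixed R$_3$-move $M$ in $\gamma$ through a germ $g \in \alpha_3^1$ is a function $w(g,M;\sigma)$ depending only on the signs of the at most 3 arrows of $M$ that lie in $g$. Since there are 4 distinguished arrows $a_1,\dots,a_4$ and each germ sees at most 3 of them, $w(g,M;\sigma)$ must be independent of at least one $\sigma_i$. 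Summing $\sigma_i$ over $\{\pm 1\}$ then kills the per-R$_3$-move contribution, and adding over all R$_3$-moves of $\gamma$ yields $\alpha_3^{1\,(4)}(\gamma) \equiv 0 \pmod 2$.

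The main obstacle is reconciling this combinatorial derivative with the genuine Vassiliev filtration on $H^1$ of the space of long knots, which is what ``finite-type 1-cocycle of degree 3'' literally refers to. Two subtleties must be checked: first, that the R$_3$-moves of the various resolutions $\gamma_\sigma$ can be canonically identified, which requires the compatibility condition $\mathrm{w}(e)\varepsilon(e)$ from Section~\ref{sec:notations} to persist after sign flips -- this is automatic because the condition is intrinsic to honest knot-diagrammatic R$_3$-moves; second, that R$_1$- and R$_2$-moves in $\gamma$ do not create hidden contributions via the partial-germ terms of $\alpha_3^1$, which is handled by Proposition~\ref{prop:R1R2}, ensuring $\alpha_3^1$ is blind to R$_1$- and R$_2$-moves by construction. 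Working mod 2 isolates the purely combinatorial count and bypasses the sign bookkeeping that would be needed integrally, which is precisely the scope of the mod-2 statement here as opposed to the full integral Conjecture~\ref{conjecture}.
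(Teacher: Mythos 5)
Your argument does not establish the theorem; it has a genuine gap at its central step. You propose to verify finite-typeness by showing that a fourth ``combinatorial derivative'' of $\alpha_3^1$ --- an alternating sum over the $2^4$ resolutions of four double points in a one-parameter family --- vanishes mod $2$, using the fact that each germ of $\alpha_3^1$ involves only three arrows. But \emph{being a finite-type $1$-cocycle of degree $3$} is defined here (following Vassiliev) as the existence of a lift of the Alexander-dual $(\omega-2)$-cycle into the third stage $\sigma_3$ of the filtration of the \emph{resolved} discriminant. You name the reconciliation of your combinatorial criterion with this filtration as ``the main obstacle,'' but you never carry it out: the two subtleties you do address (identifying R$_3$-moves across resolutions, and R$_1$/R$_2$ contributions) are peripheral, while the implication ``vanishing of the $(n+1)$-st combinatorial derivative $\Rightarrow$ the dual cycle lifts to $\sigma_n$'' is precisely what is missing. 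Note that your counting argument is completely general --- a degree-$n$ germ formula sees at most $n$ arrows, so its $(n+1)$-st derivative would vanish for every $n$ --- so if your reduction were valid it would prove Conjecture~\ref{conjecture} (at least mod $2$) in all degrees. The paper explicitly leaves that as a conjecture, which is a strong signal that the reduction is not known to hold.

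The paper's actual proof takes a different and much more concrete route: it works entirely inside Vassiliev's resolved discriminant, explicitly exhibiting the chain whose intersection form is $\alpha_3^1$, computing its boundary in $C_{\omega-3}(\sigma_1)$, and then constructing by hand the successive pieces in $C_{\omega-2}(\sigma_2\setminus\sigma_1)$, $C_{\omega-3}(\sigma_2)$ and $C_{\omega-2}(\sigma_3\setminus\sigma_2)$ (Figures~\ref{pic:ft3}--\ref{pic:ft1}) that assemble into a cycle lifted to $\sigma_3$. This requires Vassiliev's cell calculus with differential-geometric decorations (tangency and colinearity conditions on derivatives of the parametrization), including a delicate dimension count showing that one cell does \emph{not} contribute a boundary piece. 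None of this geometric content appears in, or is replaced by, your argument. To salvage your approach you would need an independent proof that your derivative criterion characterizes membership in the Vassiliev filtration for $1$-cocycles --- which is not available and is essentially the open problem the paper's conjectures are about.
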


\begin{corollary}
$\alpha_3^1$ and $v_3^1$ coincide over $\ZZ_2.$
\end{corollary}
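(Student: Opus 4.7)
The plan is to combine Theorem~\ref{thm:mod2} with Vassiliev's classification of low-degree finite-type $1$-cocycles and the non-triviality detected by the loop $\operatorname{rot}(K)$ in the preceding theorem.

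By Vassiliev's result cited at the beginning of Section~\ref{sec:TT}, the group of finite-type integral $1$-cocycles of knots of degree $\leq 3$, modulo coboundaries, is isomorphic to $\ZZ$ and generated by $v_3^1$; in particular, no non-trivial finite-type $1$-cocycle classes exist in degree $\leq 2$. A universal coefficient argument applied to the low-degree part of Vassiliev's spectral sequence transfers this to $\ZZ_2$-coefficients: the corresponding quotient group is $\ZZ_2$, generated by $[v_3^1 \bmod 2]$. Theorem~\ref{thm:mod2} places $[\alpha_3^1 \bmod 2]$ in precisely this quotient, so it must equal either $0$ or $[v_3^1 \bmod 2]$.

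To decide which, I would evaluate $\alpha_3^1 \bmod 2$ on the loop $\operatorname{rot}(K)$. Any coboundary $d\beta$ vanishes on every loop $\gamma$, since $\left<d\beta, \gamma\right> = \left<\beta, \partial\gamma\right> = 0$, and any degree-$\leq 2$ finite-type class is cohomologically trivial by the above. But the previous theorem computes $\alpha_3^1(\operatorname{rot}(K)) = -v_2(K)$, so taking $K$ to be the trefoil (for which $v_2(K) = 1$) yields an odd value. Hence $[\alpha_3^1 \bmod 2] \neq 0$ in the quotient, forcing $[\alpha_3^1 \bmod 2] = [v_3^1 \bmod 2]$, which is the claim.

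The main obstacle is the mod-$2$ version of Vassiliev's classification used in the second paragraph. The integral statement — no non-trivial classes in degree $\leq 2$, and a one-dimensional degree-$3$ group generated by $v_3^1$ — is classical, but the passage to $\ZZ_2$ requires ruling out extra classes produced by $2$-torsion on the relevant pages of the Vassiliev spectral sequence. In practice this reduces to the already-known fact that $v_3^1 \bmod 2$ is non-trivial (as given by Vassiliev's explicit formula), so the universal coefficient sequence collapses into a plain isomorphism in this range, closing the argument.
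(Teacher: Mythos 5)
Your overall strategy --- pin down the class $[\alpha_3^1 \bmod 2]$ by a classification of low-degree finite-type $1$-cocycles, then exclude triviality via $\operatorname{rot}(K)$ --- is genuinely different from the paper's, and the non-triviality half works: since $\alpha_3^1(\operatorname{rot}(K))=-v_2(K)$ is odd for the trefoil, $\alpha_3^1\bmod 2$ is not a coboundary. The gap is the classification step. The paper only cites the \emph{integral} statement that the group of finite-type $1$-cocycles of degree $3$ is $\ZZ$; you need that the group of finite-type $1$-cocycle classes of degree $\leq 3$ \emph{over $\ZZ_2$} is exactly $\ZZ_2$, generated by $v_3^1\bmod 2$. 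Your closing claim that this ``reduces to the already-known fact that $v_3^1\bmod 2$ is non-trivial'' does not do the job: non-triviality of the reduction only shows the map $\ZZ\to(\text{mod-2 group})$ is non-zero, and does not exclude extra mod-$2$ classes coming from $2$-torsion in the adjacent terms of Vassiliev's spectral sequence, or mod-$2$ classes with no integral lift. If such a class $w$ existed, $[\alpha_3^1\bmod 2]$ could equal $[v_3^1\bmod 2]+w$, and the evaluation on $\operatorname{rot}(K)$ could not tell the difference. So the argument is incomplete precisely at the point you flagged as the main obstacle.

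The paper's proof sidesteps any such classification. It observes that the Alexander-dual cycles of $\alpha_3^1\bmod 2$ and $v_3^1\bmod 2$ have the same principal part in $\sigma_3\setminus\sigma_2$ (Fig.~\ref{pic:ft3}, matched against Vassiliev's stabilization formula over $\ZZ_2$), so their difference is a relative cycle supported in $\sigma_2$; the single vanishing result $\tilde{H}_{\omega-3}(\sigma_2;\ZZ_2)=0$ then forces that difference to bound, i.e.\ the two cocycles are cohomologous. This needs neither the evaluation on $\operatorname{rot}(K)$ nor any statement about the full group of degree-$\leq 3$ classes. To rescue your route you would have to carry out the mod-$2$ computation of the low-degree columns of the spectral sequence --- a substantially heavier input than the one vanishing group the paper actually uses.
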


\begin{proof}[Proof of the corollary (the notations are introduced thereafter, after \citep{Vassiliev1990})]
It appears in the proof of Theorem~\ref{thm:mod2} that the reductions mod $2$ of $\alpha_3^1$ and $v_3^1$ have the same principal part in $\sigma_3\setminus \sigma_2$ (Fig.\ref{pic:ft3}; see \citep{Vassiliev}, \textit{Stabilization formula over $\ZZ_2$ and Proposition~6}). Since the reduced homology group $\tilde{H}_ {\omega-3} (\sigma_2; \ZZ_2)$ is trivial \citep{Vassiliev1990}, it follows that $\alpha_3^1$ and $v_3^1$ are cohomologous over $\ZZ_2$.
\end{proof} 

\begin{remark}
This gives an answer to a question that was asked by T.Fiedler ten years ago \citep{privateFiedler}: yes, there is a description of $v_3^1 \operatorname{mod} 2$ that does not involve differential geometric aspects. However, it will appear in the proof of the theorem that the homological puzzle inside Vassiliev's resolved discriminant still contains some differential geometric pieces. 
\end{remark}

The rest of this section is devoted to the proof of Theorem~\ref{thm:mod2}.

Let us recall how Vassiliev's finite-type cohomology classes are defined -- after \citep{Vassiliev1990} and \citep{VassilievBook}. The space of all smooth immersions $\RR\rightarrow \RR^3$ is denoted by $\mathcal{K}$. To think of it as a manifold, Vassiliev constructs approximations of this space by finite-dimensional affine spaces of polynomial immersions, and shows that part of his construction stabilizes as the dimension tends to infinity. We will denote here this finite dimension of $\mathcal{K}$ by the letter $\omega$. The space $\mathcal{K}$ has a natural stratification induced by the degree of non genericity of the immersions. Non generic immersions form the \textit{discriminant} $\Sigma$ of $\mathcal{K}$.

The crucial observation is made that using Alexander duality, an $n$-cocycle in the space of knots -- that is, in $\mathcal{K}\setminus \Sigma$ -- is the linking form with some $(\omega - n - 1)$-cycle in the one-point compactification of $\Sigma$, relative to the compactification point.
From then on, the idea of Vassiliev is to measure the complexity of those cycles: 
\begin{enumerate}
\item Each elementary part of $\Sigma$ is artificially \textit{dilated}, \ie replaced with a cell of a higher dimension, which depends essentially on its complexity in the stratification.

\item This new space made of dilated cells -- called a \textit{resolution} of the discriminant, is endowed with a filtration, also related with the initial complexity of each cell in the stratification.

\item An $n$-cocycle in the space of knots is said to be of finite type no greater than $k$ if the associated $(\omega - n - 1)$-cycle in $\Sigma$ has a lift in the $k$-th stage of the filtration of the resolution.
\end{enumerate}

How to relate this with the formula defining $\a$?

By nature, arrow germ formulas are intersection forms with certain relative $1$-codimensional chains in the space $(\mathcal{K}, \Sigma)$. The boundary of these chains in $\Sigma$, of course, does not have itself a boundary in $\Sigma$. But it does have a non trivial boundary in the resolution of $\Sigma$. The question is, can this \enquote{artificial}  boundary be pushed deeper into the filtration so as to vanish (see Fig.\ref{pic:FT})?

\begin{figure}[h!]
\centering 
\psfig{file=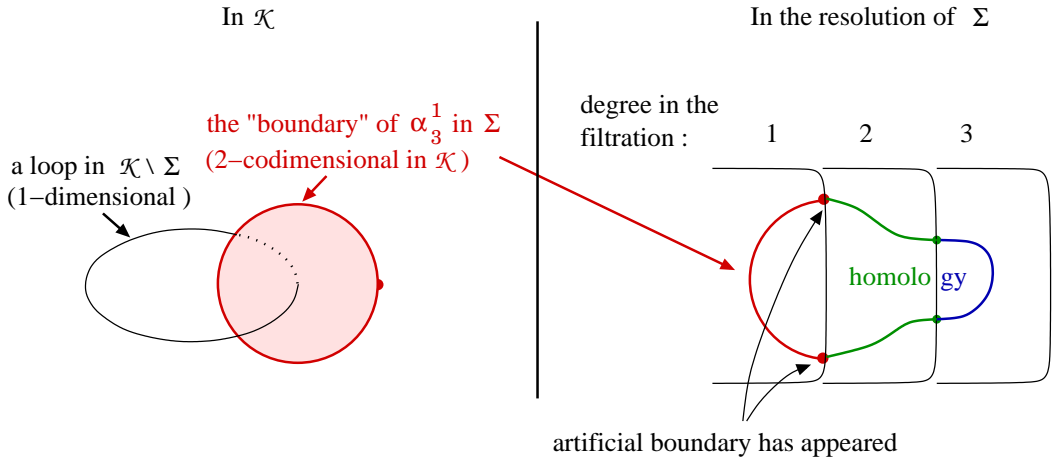,scale=1}
\caption{Finite-type $1$-cocycles of degree $3$}\label{pic:FT} 
\end{figure}

Note that \citep{Vassiliev} adopts the opposite approach: D.Teiblum and V.Turchin had found a candidate for the \enquote{principal part} of a cycle, \ie a relative homology class in $\sigma_3\setminus \sigma_2$, and V.A.Vassiliev computed its successive boundaries in the associated spectral sequence until he found an actual formula for the $1$-codimensional chain on the left of Fig.\ref{pic:FT}.
\subsubsection*{Vassiliev calculus}

In practice, each cell (\textit{J-block} in \citep{VassilievBook}) in the resolution of $\Sigma$ is graphically represented by a based circle marked with finitely many distinguished points; a point can be distinguished in two ways that are not mutually exclusive: being marked with a \textit{star}, or being linked by a \textit{chord} to one or more other distinguished points. Roughly speaking, the chords and stars indicate where the singular knots that \textit{respect} the diagram should have double points and vanishing derivative.

Two essential quantities are associated with such a cell: its \textit{complexity} which is its degree in the filtration, and its dimension. Denote by $\abs{A}$ the number of points in the circle that bound at least one chord; by $\sharp A$, the number of connected components of the abstract graph of chords; by $b$, the number of stars.
\begin{itemize}
\item The complexity $i$ of a cell is given by the formula:
$$i = \abs{A} - \sharp A +b.$$
\item Its dimension is given by:
$$\operatorname{dim} = (\omega- 3i) + (\text{number of distinguished points}) + (\text{number of chords }+\text{ number of stars }-1).$$

\end{itemize}
The first summand $(\omega- 3i)$ is the dimension of the set of knots in $\mathcal{K}$ that \textit{respect} the chord diagram. It explains the choice of $i$ as a complexity. The first two summands together, $(\omega- 3i)+(\text{number of distinguished points})$, give the dimension of the set of knots in $\mathcal{K}$ that respect the chord diagram \textit{up to equivalence}, which is defined as usual by positive homeomorphisms of the based circle.
Finally, the last summand corresponds to an \textit{artificial simplex} whose vertices are the chords and the stars of the diagram. Roughly speaking, it is this simplex that enables high dimensional chains to \enquote{live} in the highly degenerate parts of $\Sigma$.

\begin{remark}
Despite the presence of $\omega-\ldots$ in the dimension formula, one must not forget that those cells are not embedded in $\mathcal{K}$, hence they do not have a codimension there. With one exception though: the cells of complexity $1$ are  naturally homeomorphic with actual parts of $\Sigma$ (of codimension $1$ in $\mathcal{K}$), since the artificial simplex has dimension $0$ in this case.
\end{remark}

\begin{figure}[h!]
\centering 
\psfig{file=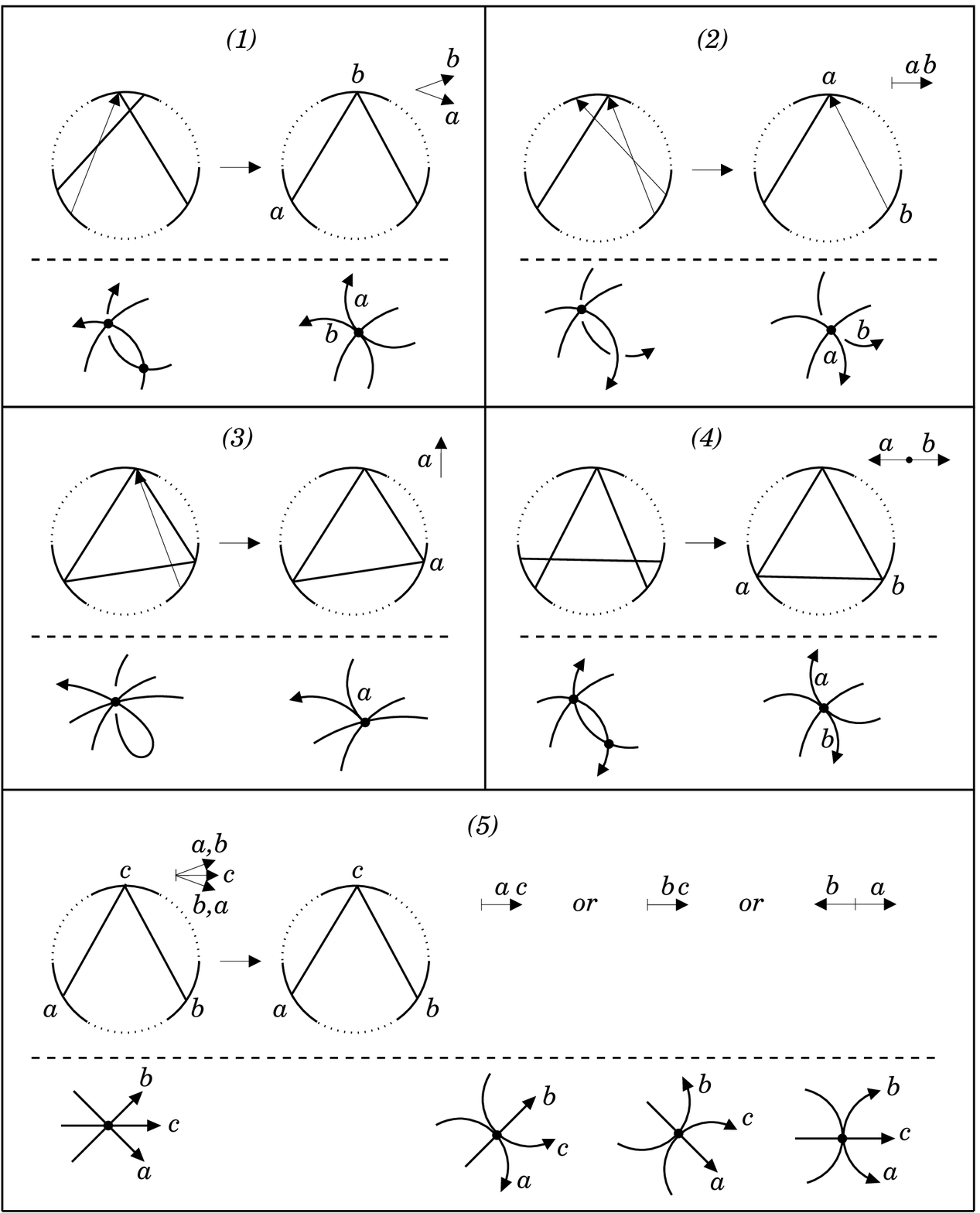,
scale=0.82}
\caption{Exceptional degeneracies accompanied with differential geometric conditions}\label{pic:geom} 
\end{figure}

The crucial point to make actual computations is that the boundary of such a cell is not in general a combination of other similar cells. Rather, it is a combination of \textit{subvarieties} of cells, sometimes defined by conditions of (differential) geometric nature. 
This phenomenon requires additional notations on cell diagrams. In general, to compute the dimension of a cell diagram enhanced with geometric conditions, one first computes the dimension of the ambient cell, and then the codimension induced by the additional conditions.

There are two basic additional decorations on a cell diagram:
\textit{oriented chords}, and \textit{dashed chords}. They indicate that, inside the ambient cell, we only consider the knots with a classical crossing --  respectively, double point -- as indicated by the chord. If the endpoints of this chord do not coincide with any other particular point of the cell diagram, then this additional condition is of codimension $0$ --  respectively, $1$.

First let us recall the non-geometric types of degeneracies.
\begin{enumerate}
\item A chord turns into a dashed chord. It corresponds to the loss of one dimension in the artificial simplex -- the knots, however, still have the corresponding double point. Note that this may or may not involve a $-1$ jump in the filtration degree.
\item An oriented chord turns into a dashed chord. It means that the branches of the corresponding crossing tend to coincide and form a double point.
\item An edge in the chord diagram is shrunk to a point -- \ie two distinguished points tend to coincide. The special rules that accompany this degeneracy is described in \citep{Vassiliev} (\textit{pp. 44-45}). 

\end{enumerate}

We now give in Fig.\ref{pic:geom} a list of examples of exceptional situations that involve differential geometry. It may not be exhaustive in general, but it covers every typical case in our computations related to $\a$.
The superscripts over the chord diagrams in Fig.\ref{pic:geom} respect the notations from \citep{Vassiliev} and should be read respectively as follows ($f$ denotes in each case a parametrization of the knot).

\begin{enumerate}
\item $f^\prime(a)$ and $f^\prime(b)$ have colinear projections to $\RR^2$ with non-negative coefficients and, moreover, the partial derivative at $b$ with respect to the third coordinate is greater than that at $a$.
\item $f^\prime(a)$ and $f^\prime(b)$ have colinear projections to $\RR^2$ with non-negative coefficients.
\item $f^\prime(a)$ is directed \enquote{up} (in the direction of projection, towards the eye).
\item $f^\prime(a)$ and $f^\prime(b)$ have opposite directions.
\item Diagram on the left: the projection to $\RR^2$ of $f^\prime(c)$ is a linear combination of the projections of $f^\prime(a)$ and $f^\prime(b)$, with non-negative coefficients.

Diagrams on the right: see point $2$. 

\end{enumerate}

With those indications, one can now readily check that the following pieces (Figs.\ref{pic:ft3} to \ref{pic:ft1}) fit well into a cycle in the resolved discriminant, and that the part in $C_{\omega-2}(\sigma_1)$ (Fig.\ref{pic:ft1}) is indeed the boundary of the chain presentation of $\alpha_3^1$ (Fig.\ref{pic:a31}). The only non-trivial point that remains to be made clear is why the last diagram on Fig.\ref{pic:ft2} does not have a piece of boundary in the cell \raisebox{-1.2ex}{\includegraphics
[scale=1]{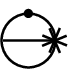}}. The reason is that because of the superscript \raisebox{-1.5ex}{\includegraphics
[scale=1]{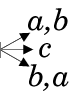}}, this degeneracy would involve too large a jump in dimension: the cusp at the star point should be tangent to the other branch that passes through this point, a condition of codimension $1$ in a cell of dimension $\omega-3$.

\begin{figure}[h!]
\centering 
\psfig{file=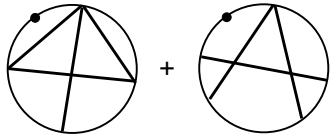,
scale=0.76}
\caption{The Alexander dual cycle to $\alpha_3^1$ -- \textit{principal} part in $C_{\omega-2}(\sigma_3\setminus \sigma_2)$}
\label{pic:ft3} 
\end{figure}

\begin{figure}[h!]
\centering 
\psfig{file=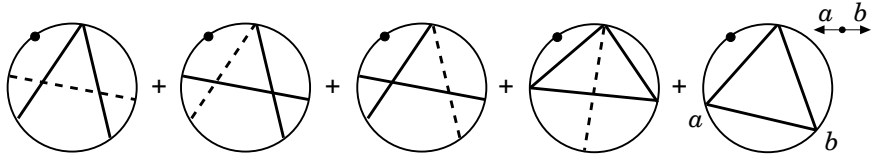,
scale=0.76}
\caption{The common boundary of Figs.\ref{pic:ft3} and \ref{pic:ft2} in $C_{\omega-3}(\sigma_2)$ (see \citep{Vassiliev}, \textit{Proposition~7})}
\label{pic:ft25} 
\end{figure}

\begin{figure}[h!]
\centering 
\psfig{file=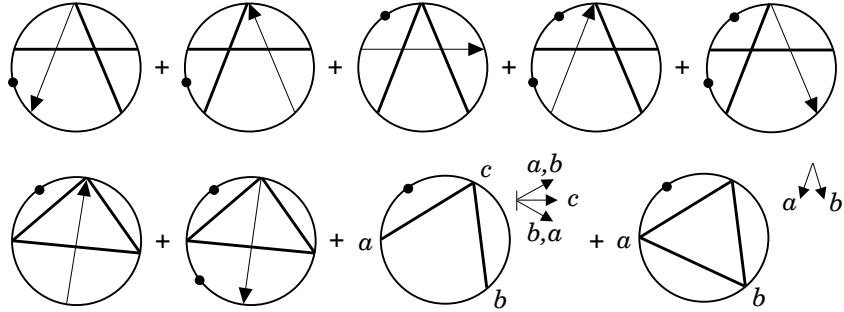,
scale=0.76}
\caption{The Alexander dual cycle to $\alpha_3^1$ -- part in $C_{\omega-2}(\sigma_2\setminus \sigma_1)$. Two points at infinity on the same diagram means the formal sum of two choices. The superscript of the last diagram means that the direction \enquote{down} (of projection to $\RR^2$) is a non-negative combination of $f^\prime (a)$ and $f^\prime (b)$}
\label{pic:ft2} 
\end{figure}

\begin{figure}[h!]
\centering 
\psfig{file=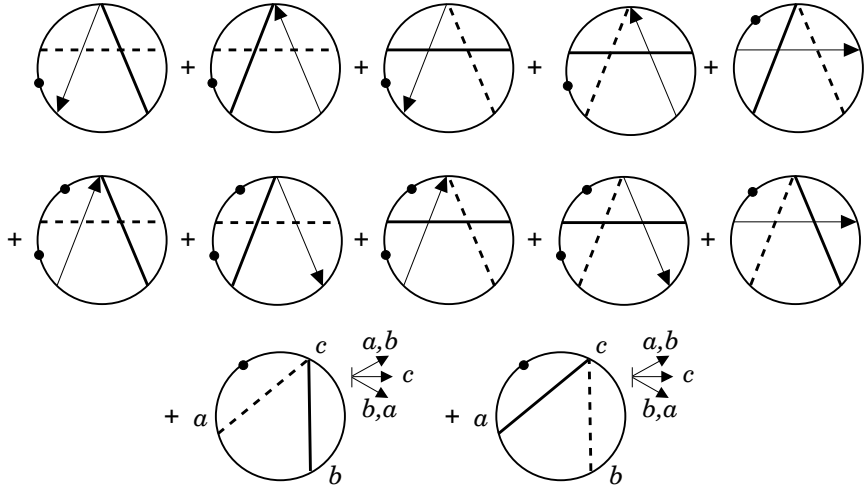,
scale=0.76}
\caption{The common boundary of Figs.\ref{pic:ft2} and \ref{pic:ft1} in $C_{\omega-3}(\sigma_1)$}
\label{pic:ft15} 
\end{figure}

\begin{figure}[h!]
\centering 
\psfig{file=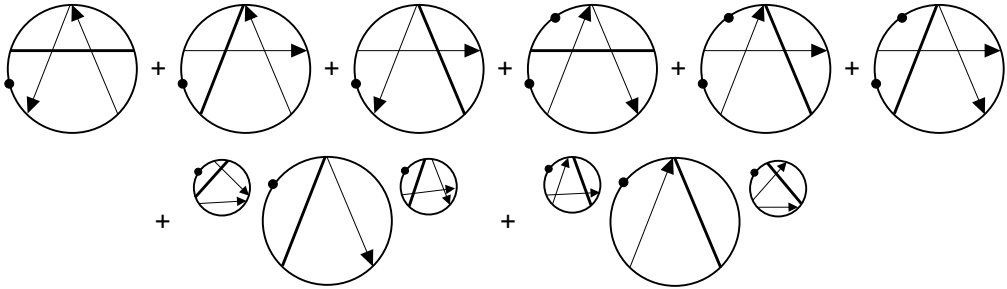,
scale=0.76}
\caption{The Alexander dual cycle to $\alpha_3^1$ -- part in $C_{\omega-2}(\sigma_1)$. The superscripts here mean that we consider only singular knots with this prescribed behaviour on each side of the stratum}
\label{pic:ft1} 
\end{figure}

\bibliographystyle{plain}
\bibliography{bibli}

\end{document}